\newtheorem{pro}{Proposition}
\newtheorem{cor}[pro]{Corollary}
\newtheorem{lem}[pro]{Lemma}
\newtheorem{subclaim}{Claim}[pro]
\newtheorem{thm}[pro]{Theorem}
\newtheorem{qst}[pro]{Question}
\newtheorem{prerem}[pro]{Remark}
\newtheorem{predefin}[pro]{Definition}
\newenvironment{dfn}{\begin{predefin}\rm}{\hfill $\blacktriangle$\end{predefin}}
\DeclareMathOperator{\tw}{tw}
\DeclareMathOperator{\cn}{c}
\begin{document}

\title{\textbf{The Game of Cops and Robber\\ on (Claw, Even-hole)-free Graphs 
}}
\author{
		Ramin Javadi%
		\thanks{Corresponding author, Department of Mathematical Sciences,
			Isfahan University of Technology,
			P.O. Box: 84156-83111, Isfahan, Iran. School of Mathematics, Institute for Research in Fundamental Sciences (IPM), P.O. Box: 19395-5746,
			Tehran, Iran.  Email Address: \href{mailto:rjavadi@iut.ac.ir}{rjavadi@iut.ac.ir}.}
		\and
		Ali Momeni%
		\thanks{
		Department of Mathematical Sciences, Isfahan University of Technology, P.O. Box: 84156-83111, Isfahan, Iran. Email Address: \href{mailto:alimomeni.ma@gmail.com}{alimomeni.ma@gmail.com}.
		} 		 
	}

\date{}
\maketitle
\begin{abstract}
In this paper, we study the game of cops and robber on the class of graphs with no even hole (induced cycle of even length) and claw (a star with three leaves). 
The cop number of a graph $G$ is defined as the minimum number of cops needed to capture the robber. 
Here, we prove that the cop number of all claw-free even-hole-free graphs is at most two and, in addition, the capture time is at most $2n$ rounds, where $n$ is the number of vertices of the graph. 
Moreover, our results can be viewed as a first step towards studying the structure of claw-free even-hole-free graphs.

\end{abstract}
\section{Introduction}
The game of cops and robber is a two-player turn-based game between the first player who controls a fixed number of cops and the second player who controls a robber.
The game is played on a given connected graph $G=(V, E)$ on which the cops pursue the robber to capture him, and the robber tries to escape.
Each \textit{round} of the game consists of a cops' turn followed by a
robber's turn.
In the first round, the first player locates the cops on some vertices of $G$ and then the second player locates the robber on a vertex. 
Afterward, the cops and the robber move consecutively; at each cops' turn (resp. robber's turn), every single cop (resp. the robber) can stay in his position or move to an adjacent vertex. 
Once a cop is in the robber's position, the game is over, and the first player wins. 
The \textit{cop number} of a connected graph $G$, denoted by $\cn(G)$, is the minimum number of cops which are sufficient to capture the robber on $G$, i.e., the minimum number $k$ such that $k$ cops have a winning strategy on $G$. 
We follow \cite{BERARDUCCI1993389,Joret:2008aa} to define the cop number of a disconnected graph as the maximum of the cop numbers of its connected components. Also, $G$ is called to be \textit{$k$-cop-win}, if $\cn(G)= k$ and is called \textit{cop-win}, if $\cn(G)=1$. 
For instance, Petersen graph is the smallest $3$-cop-win graph \cite{Beveridge:2011aa} and a $4$-cop-win graph has at least $19$ vertices \cite{turcotte20214}. 
For other variants of the game, see e.g. \cite{doi:10.1002/jgt.20591,Luckraz_2018,gavenvciak2016structural,FOMIN2008236}.

For a $k$-cop-win graph $G$, the \textit{capture time} of $G$ is the minimum number $t$ such that $k$ cops can always capture a robber on $G$ within $t$ rounds, regardless of how the robber plays.
It can be easily seen that for every positive integer $k$, the capture time of an $n$-vertex $k$-cop-win graph is at most $O(n^{k+1})$.
Bonato et al. \cite{BONATO20095588} showed that this bound is not tight for $k=1$, proving that the capture time of all cop-win graphs on $n$ vertices is at most $n-3$. 
However, surprisingly, the upper bound happens to be asymptotically tight for all integers $k\geq 2$, i.e. 
there are $k$-cop-win graphs with the capture time in $\Theta(n^{k+1})$ for every integer $k\geq 2$ \cite{kinnersley2018bounds,BRANDT2020143}.

 The problem of finding the exact value or bounds on the cop numbers of graphs has been studied extensively for decades.
 For a class of graphs $\mathcal{H}$, we say that $\mathcal{H}$ is \textit{cop-bounded} if there is a constant $k= k(\mathcal{H})$, such that for every graph $H\in \mathcal{H}$, $c(H)\leq k$. 
For instance, it is easy to see that every tree is cop-win, so the class of all trees is cop-bounded.
It is a classic and well-studied question in the literature: which classes of graphs are cop-bounded?
One of the important classes that this question is studied is the class of graphs that exclude a (some) certain graph(s) as a subgraph, induced subgraph, or minor. 
Given a graph $H$, we say that $G$ is $H$-free, $H$-subgraph-free, or $H$-minor-free if $G$ does not contain an induced subgraph, subgraph, or minor isomorphic to $H$, respectively. 
Also, for a class of graphs $\mathcal{H}$, we say that $G$ is $\mathcal{H}$-free if $G$ is $H$-free for every $H\in \mathcal{H}$ (the same definition can be stated for $\mathcal{H}$-subgraph-free and $\mathcal{H}$-minor-free).
The first results in the context date back to 1984 when it was proved in  \cite{AIGNER19841}  that all planar graphs have the cop number at most $3$ and in \cite{ANDREAE1984111} that the class of $k$-regular graphs are cop-unbounded for every integer $k\geq 3$. 
The latter result can be also implied from a recent result in \cite{Bradshaw:2020aa} stating that the cop number of any graph with girth \( g \) and minimum degree \( \delta \) is at least \( \frac{1}{g} \left( \delta - 1 \right) ^{ \lfloor \frac{g-1}{4} \rfloor } \).
 Also, there is a generalization of the former result stating that for every graph $G$, \( \cn(G) \leq (4 g(G)+10)/{3}  \), where $g(G)$ is the genus of the graph $G$ \cite{doi:10.1137/20M1312150}.
   In addition, Andreae in \cite{ANDREAE198637} proved that for every fixed graph $H$, the class of $H$-minor-free graphs is cop-bounded. 
More precisely, if $h $ is a vertex of \( H \) such that \( H-h \) has no isolated vertex, then the cop number of every $H$-minor-free graph is at most $ |E( H - h)|$.

 Joret et al. proved in \cite{Joret:2008aa} that for every graph $H$, the class of $H$-free graphs is cop-bounded if and only if every connected component of $H$ is a path. 
This result implies 
that the class of claw-free graphs are not cop-bounded. 
In particular, 
if $G$ is \( P_t \)-free, then \( t -2 \) cops can capture the robber in at most \( t -1 \) moves \cite{sivaraman2019application}.

For a class of graphs $\mathcal{H}$, it is proved in \cite{masjoody2020cops} that 
if every connected component of the graphs in \( \mathcal{H} \) has a bounded diameter, then the class of $\mathcal{H}$-free graphs is cop-bounded if and only if \( \mathcal{H} \) contains either a forest of paths or two graphs $F_1$, $F_2$, each having at least one vertex of degree three
such that every connected component of $F_1$ is a path or a generalized claw,
and every connected component of $F_2$ is a path or a generalized net.
However, the problem of characterization of all classes $\mathcal{H}$ for which $\mathcal{H}$-free graphs are cop-bounded is still open.
One interesting case is when $\mathcal{H}$ is a class of some holes with unbounded diameters. 
By a hole, we mean a cycle of length at least $4$ without any chord.
For instance, it is proved in \cite{Sivaraman:2020aa} that if $\mathcal{H}$ is the class of all holes of length at least \( k\),  \( (k \geq 4) \), then the cop-number of every $\mathcal{H}$-free graph is at most \( k-3 \).
It is also known that any class of graphs with bounded treewidth is cop-bounded. 
In fact, Joret et al. \cite{Joret:2008aa} proved that for every graph $G$, $\cn(G)\leq 1/2 \tw(G)+1$, where $\tw(G)$ is the treewidth of the graph $G$.

 Meanwhile, the structure of odd-hole-free and even-hole-free graphs have been studied widely in the literature, mostly motivated by the study and generalization of perfect graphs  (for instance, see \cite{SCOTT201668,CONFORTI200441,CHUDNOVSKY2010313,DASILVA2013144}). 
This gives rise to the following natural question that whether the classes of odd-hole-free and even-hole-free graphs are cop-bounded. 
At first glance, one may easily check that odd-hole-free graphs are cop-unbounded since subdividing all edges of a graph does not decrease its cop number  \cite{Joret:2008aa,BERARDUCCI1993389}. 
How about when we also exclude claws? One may see that  (claw, odd-hole)-free graphs are also cop-unbounded. 
To see this, we need a notion called \textit{clique substitution} defined in \cite{Joret:2008aa}. 
Given a graph $G$, its clique substitution is the graph $H$ obtained from $G$ by replacing each vertex $v$ of $G$ by a clique $C_v$ of the size $\deg(v)$ such that, if $u$ is adjacent to $v$ in $G$, then a vertex in $C_v$ is adjacent to a vertex in $C_u$ in a way that every vertex in a clique $C_v$  has exactly one neighbor outside $C_v$. 
Joret et al. \cite{Joret:2008aa} proved that if $H$ is the clique substitution of $G$, then $\cn(G)\leq \cn(H)$. 
Also, it is clear that whatever $G$ is, $H$ is always odd-hole-free and claw-free as well. 
Hence, if $\mathcal{H}$ is a cop-unbounded class, then the class of clique substitutions of the graphs in $\mathcal{H}$ is also cop-unbounded, and they are evidently (claw, odd-hole)-free. 
So, we have
\begin{cor}
The class of all (claw, odd-hole)-free graphs is cop-unbounded.
\end{cor}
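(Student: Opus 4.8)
The plan is to use the \textit{clique substitution} construction already recalled in the introduction. Fix any cop-unbounded class $\mathcal{H}_0$ — for instance the class of $3$-regular graphs, which is cop-unbounded by \cite{ANDREAE1984111} (one could equally take the high-girth, high-minimum-degree graphs of \cite{Bradshaw:2020aa}). For each $G \in \mathcal{H}_0$ let $H_G$ be its clique substitution, with blown-up cliques $C_v$ (for $v \in V(G)$, $|C_v| = \deg(v)$), and set $\mathcal{H}_1 := \{H_G : G \in \mathcal{H}_0\}$. I will show that every member of $\mathcal{H}_1$ is (claw, odd-hole)-free and that $\cn(H_G) \geq \cn(G)$; together these make $\mathcal{H}_1$ — and hence the whole class of (claw, odd-hole)-free graphs — cop-unbounded.

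For claw-freeness: any vertex $w$ lies in some clique $C_v$, and by construction all but one of its neighbours lie in $C_v$; so among any three neighbours of $w$ at least two are inside $C_v$ and therefore adjacent, i.e.\ $N(w)$ contains no independent set of size three and $w$ is not the centre of an induced claw. For odd-hole-freeness I would examine how an induced cycle $D$ of length at least $4$ in $H_G$ meets the cliques. Since each vertex of $C_v$ has a unique neighbour outside $C_v$, the cycle $D$ cannot enter and leave $C_v$ through a single vertex, so $|V(D) \cap C_v| \geq 2$ whenever $D$ meets $C_v$; as $C_v$ is a clique and $D$ is chordless, this forces $|V(D) \cap C_v| = 2$ with the two vertices consecutive on $D$, and $D$ meets $C_v$ in exactly one ``pass''. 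Contracting the cliques then turns $D$ into a genuine cycle $v_1 v_2 \cdots v_m v_1$ in $G$ with $m \geq 3$, and $D$ uses exactly one edge inside each $C_{v_i}$ and exactly one edge between consecutive cliques; hence $|V(D)| = 2m$ is even. Thus $H_G$ has no odd hole (in fact all its holes have even length at least $6$).

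Finally, by the result of Joret et al.\ quoted in the excerpt, $\cn(G) \leq \cn(H_G)$ for every $G$, so $\{\cn(H_G) : G \in \mathcal{H}_0\}$ is unbounded because $\{\cn(G) : G \in \mathcal{H}_0\}$ is, and the corollary follows. The only step that requires genuine care is the analysis of how an induced cycle of $H_G$ interacts with the cliques $C_v$ — in particular ruling out that such a cycle meets a clique in a single vertex or revisits a clique — which is exactly where the ``every vertex of $C_v$ has exactly one neighbour outside $C_v$'' feature of the clique substitution is used; everything else is either a one-line observation or a cited theorem.
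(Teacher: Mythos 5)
Your proposal is correct and follows exactly the paper's route: the corollary is established in the paragraph preceding its statement via the clique substitution of any cop-unbounded class, the inequality $\cn(G)\leq \cn(H)$ from Joret et al., and the observation that clique substitutions are always claw-free and odd-hole-free. You have merely filled in the (correct) verification that every hole of a clique substitution meets each clique $C_v$ in exactly two consecutive vertices and hence has even length, a detail the paper leaves as "clear."
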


It is not so straightforward to answer the same question for even-hole-free graphs.
It is proved that the treewidth of (triangle, even-hole)-free graphs is at most $5$ \cite{CAMERON2018463}, so the cop number of (triangle, even-hole)-free graphs is at most $3$. 
On the other hand, (claw, even-hole)-free graphs contain all cliques, so the class has unbounded treewidth. 
In this paper, we prove, in contrast, that the class of (claw, even-hole)-free graphs is cop-bounded.

\begin{thm} \label{thm:main}
If $G$ is a (claw, even-hole)-free graph with $n$ vertices, then $\cn(G)\leq 2$ and the capture time of $G$ is at most $2n$. 
\end{thm}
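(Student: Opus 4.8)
I would have the two cops play asymmetric roles: cop $C_1$ maintains a \emph{guard} on a cleverly chosen separator, so that the robber is trapped in one side of it, and cop $C_2$ cleans up the trapped side. The tool I would use is the classical fact (in the spirit of Aigner--Fromme) that one cop can guard a shortest path $P = p_0 p_1 \cdots p_\ell$: after a bounded number of preparatory moves the cop can occupy, and forever maintain, the vertex of $P$ that is the robber's ``shadow'' (the index determined by $d(r,p_0)$ and $d(r,p_\ell)$), so that the robber can never step onto $V(P)$. If $P$ separates $G$, the robber is then confined to the closure of one component of $G-V(P)$. An even simpler version of this holds when the separator is a clique $K$: a cop parked on any vertex of $K$ confines the robber to $G-V(K)$, because any move of the robber onto $K$ is punished the next turn. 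I would arrange the bookkeeping so that $C_1$'s preparatory and repositioning moves cost at most $n$ rounds in total over the whole game.

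\textbf{The structural reduction (the crux).} The heart of the proof is to show that in a connected (claw, even-hole)-free graph one can always choose the separator that $C_1$ guards so that the robber's side becomes strictly simpler. Here I would exploit the two hypotheses together. Even-hole-freeness forbids thetas and prisms (any theta or prism already contains an even hole by a parity/pigeonhole argument on the three path lengths), which forces the union of shortest paths between any two vertices -- and hence the BFS layering from any fixed vertex -- to be tightly controlled and essentially chordal. Claw-freeness forces every neighbourhood to have independence number at most $2$, which forbids any vertex from ``seeing three directions at once'' and so keeps this layering (equivalently, the block structure) essentially \emph{linear}: the graph looks like a clique-sum tree (hence chordal, hence cop-win) together with only a controlled amount of extra cyclic structure, attached pendantly as blown-up odd holes. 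From this analysis I would extract the key lemma: either $G$ is already cop-win, or there is a clique $K$ (in the hard case, a clique of a pendant blown-up odd hole ``opposite'' its attachment, or a clique cutset) such that, with $C_1$ guarding $K$, the robber's region $R$ is either cop-win or a strictly smaller (claw, even-hole)-free graph to which the same lemma applies -- and moreover $C_1$ can \emph{slide} its guard from $K$ to the next separator of $R$ along the linear structure, with $C_2$ momentarily covering the single ``gap'', so that no third cop is ever needed.

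\textbf{Finishing and the time bound.} Once the robber is confined to a cop-win region $R$, cop $C_2$ enters $R$ in a single move (since $R$ borders the clique $C_1$ is holding) and applies the cop-win strategy, capturing the robber in at most $|V(R)|-3$ further rounds by the bound of Bonato et al.\ quoted in the introduction. To get the clean bound $2n$ I would account globally rather than iteration-by-iteration: $C_1$'s guard advances \emph{monotonically} along the linear block structure, so the total distance it ever travels (preparation plus all re-positionings) is at most $n$; and $C_2$ performs a single monotone sweep of the successive cop-win pieces, whose vertex sets are disjoint and so total at most $n$, giving at most $n$ rounds for $C_2$. Hence at most $2n$ rounds overall, and in particular $\cn(G)\le 2$.

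\textbf{Main obstacle.} The guarding lemma and the cop-win endgame are routine; essentially all the work is in the structural reduction. The delicate point is to prove that a (claw, even-hole)-free graph is genuinely ``linear enough'' that a single monotonically advancing guarded clique always peels off a cop-win piece, and that the hand-off of $C_1$'s guard from one separator to the next can be executed without ever breaching confinement. I expect the hardest case to be a blown-up odd hole glued to the rest of the graph along a clique, where one must verify precisely which clique to delete to chordalise the piece, and must check that during the transition the robber cannot slip past the momentarily weakened wall.
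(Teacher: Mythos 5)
Your proposal reduces the whole theorem to an unproven ``key lemma'': that a connected (claw, even-hole)-free graph is a clique-sum tree of chordal pieces with blown-up odd holes attached pendantly, so that one can always guard a clique separator whose removal leaves the robber in a cop-win or strictly smaller piece, and then slide that guard monotonically. This structural claim is exactly the hard content of the theorem, and you give no argument for it beyond the (correct but far weaker) observations that even-hole-free graphs exclude thetas and prisms and that claw-free graphs have neighbourhoods of independence number at most two. No such global decomposition is established in the literature (the paper itself only claims its results are ``a first step towards studying the structure'' of this class), and the actual structure is visibly richer than your picture: the paper's analysis shows that the BFS layering from a vertex decomposes each level into components that are cliques or \emph{2-cliques} (two cliques glued along a third), that distinct holes can run in parallel through the same sequence of level components (Theorem~\ref{thm:tau}), and that dominated $C_5$'s (wheels) arise as a genuinely separate case -- none of which is captured by ``pendant blown-up odd holes glued along cliques.'' You also never address how the robber is prevented from escaping around the back of $u_0$'s neighbourhood, which is where the paper must work hardest to get from three cops down to two (Lemmas~\ref{lem:box} and~\ref{lem:box2}). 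As written, the proposal is a plan whose crux you yourself flag as open, not a proof.

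For comparison, the paper does not look for clique cutsets at all. It deletes the ``box'' $B_0=N(u_0)\setminus\{u_1\}$, takes the BFS levels of the remaining component $G_0$ from $u_0$, and proves a local structure theorem for this layering: no ``forbidden paths,'' level components are cliques or 2-cliques with totally ordered back-neighbourhoods, and every hole descends and re-ascends through the levels with exactly two vertices per level. Two cops then advance level by level, each occupying a king vertex of one of (at most) two ``mate'' components per level, alternating progressing and synchronizing steps; the invariant is that every simple path from the robber to $u_1$ meets a guarded component. The third cop on $u_0$ is dispensed with by showing (Lemma~\ref{lem:box2}) that once the cops pass a non-singular component the robber can no longer reach $B_0$ anyway. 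If you want to salvage your separator-guarding outline, you would essentially have to prove this level-component structure first, at which point you would be reproducing the paper's argument.
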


Our proof also gives an insight into the structure of such graphs.
It remains open to answer the following question in general.

\begin{qst}\label{qst:main}
Is it true that the class of even-hole-free graphs is cop-bounded?
\end{qst}

Recently, there have been many pieces of work on the study of even-hole-free graphs. 
Here, we survey some most important results in this area.
A decomposition theorem for even-hole-free graphs is provided in \cite{DASILVA2013144}
strengthening a previously known decomposition result in \cite{conforti2002even}.
For a survey on the structural results, see \cite{vuvskovic2010even}.
 Chudnovsky and Seymour in \cite{Chudnovsky:2019tg} settled a conjecture by Reed and proved that every even-hole-free graph has a bisimplicial vertex. 
A bisimplicial vertex is a vertex whose neighborhood can be partitioned into two cliques.
Also, Chudnovsky et al. \cite{chudnovsky2021note} motivated by an application in condensed matter physics and quantum information theory, proved that every non-null (claw, even-hole)-free graph has a simplicial clique. 
By a simplicial clique, we mean a clique $C$ such that for every vertex $v$ in $C$, the neighbors of $v$ outside $C$ form a clique.

The treewidth of even-hole-free graphs has also been studied extensively. 
In particular, Cameron et al. \cite{CAMERON2018463} providing a structure theorem for (cap, even-hole)-free graphs proved that they are \( \chi \)-bounded, i.e., their chromatic number is bounded by their clique number. 
This raises the question of whether the treewidth of an even-hole-free graph can be bounded by its clique number? This question was answered negatively by Sintiari and Trotignon \cite{Sintiari:2019aa} by constructing a family of (\( K_4 \), even-hole)-free called \textit{layered wheels}, whose treewidth is unbounded.
Moreover, it is proved in \cite{Aboulker:2020aa} that every class of even-hole-free graphs that excluding a fixed graph as a minor has bounded treewidth and  Abrishami et al. \cite{Abrishami:2020ab}, resolving a conjecture given in \cite{Aboulker:2020aa},  proved that 
even-hole-free graphs of bounded degree have bounded treewidth.

The structure of forthcoming sections is as follows. In Section~\ref{sec:structure}, we investigate the structure of (claw, even-hole)-free graphs inside and between its distance levels. In Section~\ref{sec:holes}, we study the structure of holes in (claw, even-hole)-free graphs and their setting with respect to the distance levels. In Section~\ref{sec:algorithm}, we give an algorithm for the game of cops and robber on (claw, even-hole)-free graphs which guarantees that the cop number of these graphs is at most three. Finally, in Section~\ref{sec:refinement}, we improve the algorithm to prove Theorem~\ref{thm:main}.

\subsection{Notations and Conventions} \label{sub:notation}
Given a graph $G=(V,E)$ and a vertex $v\in V$, the set of neighbors of $v$ in $G$ is denoted by $N_G(v)$ and, whenever there is no ambiguity, we drop the subscript. 
For a subset of vertices $A\subseteq V$, the set of neighbors of $v$ in $A$ is denoted by $N_A(v)$, i.e. $N_A(v)=A\cap N_G(v)$. 
The induced subgraph of $G$ on $A$ is denoted by $G[A]$ which is a graph with vertex set $A$ and all connections as in the graph $G$. 
The graph obtained from $G$ by removing all vertices in $A$ is denoted by $G-A$. Also, for a pair of non-adjacent vertices $x,y\in V$, the graph $G+xy$ is obtained from $G$ by adding the edge $xy$ to $G$.
For two disjoint subsets of vertices $A, B\subset V$, we say that $A$ is complete (resp. incomplete) to $B$ if every vertex in $A$ is adjacent (resp. non-adjacent) to every vertex in $B$. 
Also, we say that $A$ is connected to $B$ if $A$ is not incomplete to $B$.

A sequence of distinct vertices $P_k:u_1-u_2-\cdots-u_k$ is called a path  of length $k-1$ if for every $i\in\{1,\ldots,k-1\}$, $u_i$ is adjacent to $u_{i+1}$. 
Sometimes we consider a direction on $P$ and say that $P$ is a path from $u_1$ to $u_k$. 
So, the vertices $u_{i-1}$ and $u_{i+1}$ are defined as the last vertex before and the first vertex after $u_i$, respectively. 
Also, for some indices $i,j$, $i<j$, the subpath $u_i-u_{i+1}-\cdots-u_j$ is denoted by $u_i-P-u_j$.

Let $u_0$ and $u_1$ be two arbitrary adjacent vertices in $V$. 
Also, let $B_0=N(u_0)\setminus \{u_1\}$. 
Now, we define $G_0 = G_0(G,u_0,u_1)$ to be the connected component of $G- B_0$ containing $u_0$. 
For each positive integer $j$, define the level $L_j$ as the set of vertices of distance $j$ from $u_0$ in the graph $G_0$, i.e. $L_j=\{u\in V(G_0): d_{G_0}(u,u_0)=j \}$.
For simplicity, we use the notations $L_{\geq i}=\cup_{j\geq i} L_j$ and $L_{> i}=\cup_{j> i} L_j$.
For every vertex $v\in L_i$, the set of neighbors of $v$ in $L_{i+1}$ is denoted by $N^+(v)=N_{L_{i+1}}(v)$ and 
the set of neighbors of $v$ in $L_{i-1}$ is denoted by $N^-(v)=N_{L_{i-1}}(v)$.
Every connected component of the induced subgraph of $G_0$ on $L_i$, i.e. $G_0[L_i]$, is called a \textit{level component} in $L_i$.

\section{The Structure of Level Components}\label{sec:structure}

In this section, we study the structure of a (claw, even-hole)-free graph. 
Let $G$ be a (claw, even-hole)-free graph and $G_0$ and the levels $L_j$'s be defined as in Subsection \ref{sub:notation}.
Here, we investigate the form of the level components in each level and how they are connected to other levels of the graph $G_0$.

Since $G$ is claw-free, it is an easy observation that, for every vertex $u\in L_i$, $i \in \mathbb{Z}^+$, the set of neighbors of $u$ in the next level $L_{i+1}$, \( N^+ (u)\), is a clique. 
It is because $u$ has a neighbor in $L_{i-1}$ and if it has two non-adjacent neighbors in $L_{i+1}$, then there is a claw with the center $u$. 
Thus,
\begin{lem} \label{lem:clique}
	For every vertex \( u \in L_i \), \( N^+ (u)\) is a clique.
\end{lem}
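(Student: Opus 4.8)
The plan is to derive the claim directly from claw-freeness, exactly as hinted in the paragraph preceding the lemma. Fix $i \in \mathbb{Z}^+$ and $u \in L_i$. Since $u$ lies at distance $i \geq 1$ from $u_0$ in $G_0$, the second-to-last vertex of a shortest $u_0$--$u$ path in $G_0$ is a neighbour of $u$ in $L_{i-1}$; in particular $N^-(u) \neq \emptyset$, so we may pick some $w \in N^-(u)$.

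Now suppose, for contradiction, that $N^+(u)$ is not a clique. Then there are two distinct, non-adjacent vertices $x, y \in N^+(u) \subseteq L_{i+1}$. I claim that $\{u, w, x, y\}$ induces a claw with centre $u$. Indeed, $u$ is adjacent to each of $w, x, y$ by construction, and $x$ is non-adjacent to $y$ by assumption. It remains to check that $w$ is adjacent to neither $x$ nor $y$: every edge of $G_0$ joins two vertices whose $G_0$-distances from $u_0$ differ by at most one (the standard BFS-layering property), while $w \in L_{i-1}$ and $x, y \in L_{i+1}$ differ in level by two, so $wx, wy \notin E(G_0)$. Since $G_0$ is an induced subgraph of $G$ (it is a connected component of $G - B_0$) and $w, x, y \in V(G_0)$, this yields $wx, wy \notin E(G)$ as well. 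Hence $G[\{u, w, x, y\}] \cong K_{1,3}$, contradicting claw-freeness of $G$; therefore $N^+(u)$ is a clique.

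This argument has no real obstacle; it is essentially an unpacking of definitions. The only two points worth stating explicitly are that $i \geq 1$ is exactly what guarantees a lower-level neighbour $w$ of $u$, and that the cross-level non-adjacencies $wx, wy \notin E(G)$ rest on the layering property combined with $G_0$ being an induced subgraph of $G$. I expect the entire proof to occupy only a few lines.
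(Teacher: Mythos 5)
Your proof is correct and follows exactly the paper's argument: the paper justifies Lemma~\ref{lem:clique} in the paragraph preceding it by noting that $u$ has a neighbour in $L_{i-1}$ and that two non-adjacent vertices of $N^+(u)$ would yield a claw centred at $u$. Your write-up merely makes explicit the BFS-layering and induced-subgraph details that the paper leaves implicit.
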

Now, we define a kind of path which is forbidden in $G_0$.
\begin{dfn}
Let \( P = x_1 - x_2 - \cdots - x_k \) be a path of odd length such that \( x_1, x_k \in L_i \), for some $i$, and \( x_2, x_{k-1} \in L_{i+1} \), and all other vertices are in $L_{\geq i}$. 
We say that $P$ is a \textit{forbidden path}, if either \( P \) is an induced path or \( P + x_1x_k \) is a hole. 
\end{dfn}
In the following lemma, we prove that there is no forbidden path in $G_0$.
\begin{lem} \label{lem:forbidden_path}
	There is no forbidden path in the graph $G_0$.
\end{lem}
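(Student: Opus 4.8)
The plan is to argue by contradiction. Suppose $G_0$ contains a forbidden path and, among all forbidden paths (over all starting levels), fix one, $P = x_1 - x_2 - \cdots - x_k$, whose level $i$ is as small as possible. Two easy points come first. Since $P$ has odd length, $k$ is even, and $k \geq 4$ (if $k = 2$ the single vertex $x_2$ would have to lie in both $L_i$ and $L_{i+1}$). If $P + x_1x_k$ were a hole it would be a hole on $k$ vertices with $k$ even, hence an even hole, which is excluded; so $P$ is an induced path, and in particular $x_p$ and $x_q$ are non-adjacent whenever $|p-q| \geq 2$. Also $i \geq 2$: in $G_0$ the only neighbour of $u_0$ is $u_1$, so $L_0 = \{u_0\}$ and $L_1 = \{u_1\}$ are singletons, while $L_i$ contains the two distinct vertices $x_1, x_k$.

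The crucial step is the following claim about how $L_{i-1}$ sees $P$: \emph{if $y \in L_{i-1}$ is adjacent to $x_1$, then $y$ is adjacent to none of $x_3, \dots, x_k$} (and symmetrically, if $y\in L_{i-1}$ is adjacent to $x_k$ then $y$ is adjacent to none of $x_1,\dots,x_{k-2}$). To prove it, note that since $i-1 \geq 1$ the vertex $y$ has a neighbour $w \in L_{i-2}$ (the penultimate vertex of a shortest path from $u_0$ to $y$ in $G_0$). If $y$ were also adjacent to some $x_j$ with $j \geq 3$, then $x_1 \not\sim x_j$ (inducedness, as $x_1,x_j$ are non-consecutive on $P$), and $w$ is adjacent to neither $x_1$ nor $x_j$ because $w \in L_{i-2}$ while $x_1, x_j \in L_{\geq i}$, so $d(w, x_1), d(w, x_j) \geq 2$. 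Then $y$ together with $x_1, x_j, w$ is a claw, a contradiction. This claim immediately yields that $x_1$ and $x_k$ have \emph{no} common neighbour in $L_{i-1}$: such a $y$ would be adjacent to both $x_1$ and $x_k$, contradicting the claim with $j = k$.

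Now pick any $y_1 \in N(x_1) \cap L_{i-1}$ and $y_k \in N(x_k) \cap L_{i-1}$; by the previous paragraph $y_1 \neq y_k$, $y_1 \not\sim x_k$, $y_k \not\sim x_1$, and moreover, using the claim together with the level bound $x_2, x_{k-1} \in L_{i+1}$, the vertex $y_1$ is non-adjacent to all of $x_2, \dots, x_k$ and $y_k$ is non-adjacent to all of $x_1, \dots, x_{k-1}$. Two subcases remain. If $y_1 \sim y_k$, consider the cycle $x_1 - y_1 - y_k - x_k - x_{k-1} - \cdots - x_2 - x_1$: the non-adjacencies just listed, together with $P$ being induced, show it is chordless, and it has even length $k + 2 \geq 6$, so it is an even hole — contradiction. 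If $y_1 \not\sim y_k$, consider the path $P' : y_1 - x_1 - x_2 - \cdots - x_k - y_k$: the same non-adjacencies show $P'$ is induced, its endpoints lie in $L_{i-1}$, its second and second-to-last vertices $x_1, x_k$ lie in $L_i = L_{(i-1)+1}$, its interior stays in $L_{\geq i} \subseteq L_{\geq i-1}$, and its length $k+1$ is odd. Hence $P'$ is a forbidden path of level $i-1$, contradicting the minimality of $i$. In every case we reach a contradiction, proving the lemma.

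I expect the main point — and the only step requiring genuine thought — to be the claim in the second paragraph: it is what lets one ignore the possibility that interior vertices of $P$ also lie in $L_i$ (which would otherwise spoil the chord count), by converting any unwanted edge out of $L_{i-1}$ into a claw via the neighbour one level further down that $y$ automatically has. Everything after that is bookkeeping: checking that the two derived objects — the cycle in the adjacent subcase and $P'$ in the non-adjacent subcase — really are a chordless even cycle and a bona fide forbidden path, which is immediate from the listed non-adjacencies and the level distance bounds. Note that minimality of $i$ alone (not of $k$) is all the induction needs.
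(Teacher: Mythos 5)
Your proof is correct, and it reaches the conclusion by a genuinely different mechanism than the paper, even though both arguments ultimately descend from $L_i$ towards $u_0$ and close up an even cycle. The paper extends $P$ at both ends by entire shortest paths $P^{x_1},P^{x_k}$ from $x_1$ and $x_k$ down to $u_1$, uses Lemma~\ref{lem:clique} at the first common vertex of these two paths to guarantee that some edge joins them, takes the first such edge $x_1'x_k'$ (whose endpoints must lie in a common level), and then verifies that the long closed walk $x_1'-P^{x_1}-x_1-P-x_k-P^{x_k}-x_k'-x_1'$ is a chordless even cycle. You instead take a forbidden path with the level $i$ minimal and descend only one level: either the chosen neighbours $y_1,y_k\in L_{i-1}$ are adjacent, in which case you close an even hole of length $k+2$ immediately, or they are not, in which case $y_1-x_1-\cdots-x_k-y_k$ is a forbidden path at level $i-1$, contradicting minimality; the induction bottoms out because $L_1=\{u_1\}$ is a singleton, which is why $i\geq 2$ and the auxiliary vertex $w\in L_{i-2}$ exists. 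Your key claim (a vertex of $L_{i-1}$ adjacent to $x_1$ cannot also be adjacent to any $x_j$ with $j\geq 3$) is the same claw-freeness fact the paper packages as Lemma~\ref{lem:clique}, proved locally via the neighbour one level further down. The trade-off: the paper's proof is non-inductive and produces the contradiction in one shot, but must check chordlessness of a cycle spanning many levels; your one-step descent only ever requires verifying inducedness of an object obtained by adding two vertices, at the cost of a minimal-counterexample wrapper. Both are sound; unrolled, your induction essentially reconstructs the paper's descent.
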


\begin{proof}
	Suppose that \(P= x_1 - x_2 - \cdots - x_k \) is a forbidden path such that \( x_1, x_k \in L_i \), for some $i$, and \( x_2, x_{k-1} \in L_{i+1} \), and all other vertices are in $L_{\geq i}$.
	
	Let \( P^{x_1}, P^{x_k} \) be two shortest paths from \( x_1 \) to \( u_1 \) and from \( x_k \) to \( u_1 \), respectively.
	Then, there is an edge with one endpoint in \( V(P^{x_1}) \) and another in \( V(P^{x_k}) \), because if the vertex $u\in L_j$ is the first common vertex of $P^{x_1}$ and $P^{x_k}$, then, by Lemma~\ref{lem:clique}, the neighbors of $u$ in $L_{j+1}$ form a clique and thus, the neighbors of $u$ in $P^{x_1}$ and $P^{x_1}$ are adjacent. 
Now, let \( x_1'x_k' \) be the first edge between \( P^{x_1} \) and \( P^{x_k} \) in the sense that  $x'_1$ is the first vertex in $P^{x_1}$ (by passing from \( x_1 \) to \( u_1 \)) which has a neighbor in $P^{x_k}$ and $x'_k \in V(P^{x_k})$ is the closest neighbor of $x_1'$ to $x_k$.
	Both vertices \( x_1'\) and \(x_k' \) are located in the same level, since otherwise, by Lemma~\ref{lem:clique}, there is another edge between \( P^{x_1} \) and \( P^{x_k} \) which is a contradiction with the choice of $x'_{1}$ and $x_k'$. 
Note that if $x_1$ is adjacent to $x_k$, then $x_1=x_1'$ and $x_k=x_k'$.
	
	But now, \( x_1' - P^{x_1} - x_1 - P - x_k - P^{x_k} - x_k' - x_1' \) is an even hole.
	To see this, suppose that \( x \in P^{x_1} \) be adjacent to a vertex \( y \in P \setminus \{x_1 \} \).
	 Then, \( x \in L_{i-1} \) and, by Lemma \ref{lem:clique}, \( x_1\) is adjacent to  \( y \), contradicting the fact that \( P\setminus \{x_k \} \) is an induced path.
\end{proof}

\begin{cor} \label{col:xy_edge}
	For every edge \( xy \) in \( L_i \), either \( N^-(x) \subset N^-(y) \) or \( N^-(y) \subset N^-(x) \).
\end{cor}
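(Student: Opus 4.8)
The plan is to argue by contradiction: assume there is an edge $xy$ in $L_i$ for which neither $N^-(x)\subset N^-(y)$ nor $N^-(y)\subset N^-(x)$ holds. Then there exist vertices $a\in N^-(x)\setminus N^-(y)$ and $b\in N^-(y)\setminus N^-(x)$. Note that $a\ne b$ (since $a$ is adjacent to $x$ while $b$ is not), and that we must have $i\ge 2$: if $i=1$ then $N^-(x),N^-(y)\subseteq L_0=\{u_0\}$ and both sets equal $\{u_0\}$, so no such $a,b$ can exist. Thus $L_{i-1}$ and $L_i$ are genuine levels.

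Next I would form the sequence $P:\ a-x-y-b$ and check it is a path on four distinct vertices of length $3$: consecutive vertices are adjacent by the choice of $a$, the edge $xy$, and the choice of $b$; and the four vertices are distinct because $a,b\in L_{i-1}$, $x,y\in L_i$, $a\ne b$, and $x\ne y$. All vertices of $P$ lie in $L_{i-1}\cup L_i\subseteq L_{\ge i-1}$, the endpoints $a,b$ lie in $L_{i-1}$, and the second and second-to-last vertices $x,y$ lie in $L_{(i-1)+1}=L_i$. Hence $P$ fits the template of the definition of a forbidden path relative to the index $i-1$, provided it is either induced or $P$ plus the edge joining its endpoints is a hole.

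The key observation is that the \emph{only} possible chord of $P$ is the pair $ab$: the pair $ay$ is a non-edge because $a\notin N^-(y)$, and the pair $xb$ is a non-edge because $b\notin N^-(x)$. Therefore, if $a$ is non-adjacent to $b$ then $P$ is an induced path, and if $a$ is adjacent to $b$ then $P+ab$ is a chordless $4$-cycle, i.e.\ a hole of length $4$. In either case $P$ is a forbidden path, contradicting \pref{lem:forbidden_path}. This contradiction establishes the corollary.

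I do not expect any real obstacle here; the statement is a short consequence of \pref{lem:forbidden_path}. The only points requiring a little care are the degenerate case $i=1$ (handled by the remark that $L_0=\{u_0\}$) and the verification that $ab$ is the sole candidate chord of $P$, which is exactly where the defining properties of $a$ and $b$ are used.
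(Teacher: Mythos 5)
Your proof is correct and follows essentially the same route as the paper: take $a\in N^-(x)\setminus N^-(y)$ and $b\in N^-(y)\setminus N^-(x)$ and observe that $a-x-y-b$ is a forbidden path, contradicting \pref{lem:forbidden_path}. The extra details you supply (the degenerate case $i=1$ and the verification that $ab$ is the only candidate chord) are exactly the routine checks the paper leaves implicit.
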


\begin{proof}
	Suppose that there are two vertices \( x', y' \in L_{i-1} \) such that \( x' \in N^-(x) \setminus N^-(y) \) and \( y' \in N^-(y) \setminus N^-(x) \).
	Now, \( x' - x - y - y' \) is a forbidden path, contradicting Lemma~\ref{lem:forbidden_path}.
\end{proof}

In the next result, we determine the structure of $G_0$ inside each level $L_i$.
First, we need the following lemma from  \cite{wolk1965note}. 
Let $u$ be a vertex in a graph $G$ and $C$ be the connected component of $G$ containing $u$. The vertex $u$ is called a \textit{king vertex} in $G$, if $u$ is complete to $V(C)\setminus \{u\}$.

\begin{lem} {\rm \cite{wolk1965note}} \label{lem:king_vertex_general}
	Every connected \( (P_4,C_4) - \)free graph has a king vertex.
\end{lem}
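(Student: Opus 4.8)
The plan is to prove this by a short extremal argument; this is also the classical route, and it is independent of the level structure developed above. Assume $G$ is connected, since the general statement then follows componentwise, which is exactly how a king vertex is defined. Let $v$ be a vertex of maximum degree $\Delta$ in $G$. I claim that $v$ is a king vertex, i.e. that $N(v)=V(G)\setminus\{v\}$, and I argue by contradiction.

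Suppose $v$ is not a king. Since $G$ is connected, there is then a vertex at distance exactly $2$ from $v$: I would choose $x\in N(v)$ and $w\in N(x)$ with $w\notin N[v]$, so that $v\sim x$, $x\sim w$, and $v\not\sim w$ (note that $w\neq v$ and $w\neq x$ automatically, as $w\notin N[v]$ while $x\in N[v]$).

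The heart of the argument is the claim that every $y\in N(v)\setminus\{x\}$ is adjacent to $x$. To prove it I would examine the four vertices $v,x,w,y$: the pairs $vx$, $xw$, $vy$ are edges while $vw$ is a non-edge, so if $y\not\sim x$ then either $y\sim w$, in which case $v-x-w-y-v$ is an induced $C_4$, or $y\not\sim w$, in which case $y-v-x-w$ is an induced $P_4$; either outcome contradicts the hypothesis on $G$. Hence $N(v)\setminus\{x\}\subseteq N(x)$.

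Finally, $N(x)$ contains the $\Delta-1$ vertices of $N(v)\setminus\{x\}$ together with the two further vertices $v$ and $w$; these $\Delta+1$ vertices are pairwise distinct, since $v$ and $w$ both lie outside $N(v)$ and $v\neq w$. Therefore $\deg(x)\geq \Delta+1$, contradicting the maximality of $\deg(v)$. It follows that $v$ is complete to $V(G)\setminus\{v\}$, as required. The only real content is the four-vertex case analysis in the key claim, which is entirely routine, so I do not expect a genuine obstacle; the only thing to get right is the choice of the extremal vertex $v$ and of the distance-two pair $x,w$.
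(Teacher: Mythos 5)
Your proof is correct. Note that the paper gives no proof of this lemma at all --- it is quoted from Wolk \cite{wolk1965note} as a known result --- so there is nothing to compare against; your maximum-degree argument is the standard self-contained proof of Wolk's theorem. The case analysis on $\{v,x,w,y\}$ is complete (both the induced $C_4$ and the induced $P_4$ are verified against all required non-edges), the distance-two pair exists by taking the second vertex of a shortest path from $v$ to any non-neighbour, and the degree count $\deg(x)\geq(\Delta-1)+2$ is valid since $v$ and $w$ lie outside $N(v)$ and are distinct. The only cosmetic remark is that the paper's notion of ``king vertex'' is defined componentwise, so restricting to connected $G$, as you do, loses nothing.
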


By a \textit{$2$-clique}, we mean a connected graph $G = G(A,B,K)$ whose vertex set is partitioned into three nonempty sets $A,B,K$ such that $A \cup K$ and $B \cup K$ are cliques and $A$ is incomplete to $B$.

In the following lemma, we determine the structure of $G_0$ in each level.

\begin{lem} \label{lem:cc_form}
	For every integer $i \in \mathbb{Z}^+$, every level component in $L_i$ is either a clique or a $2$-clique.
\end{lem}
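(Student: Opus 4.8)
The plan is to fix a level component $H$ in $L_i$ and show that either $H$ is complete (a clique) or it splits into the three-part structure of a $2$-clique. The natural first move is to invoke Lemma~\ref{lem:king_vertex_general}: I would like to say that $H$ is $(P_4,C_4)$-free, hence has a king vertex. Since $G$ is even-hole-free, $H$ contains no $C_4$, so the only thing to rule out is an induced $P_4$ inside $L_i$. I would argue this using Corollary~\ref{col:xy_edge}: along an induced path $a-b-c-d$ in $L_i$, the inclusion relation on the sets $N^-(\cdot)$ is forced to move monotonically, and comparing $N^-(a)$ with $N^-(d)$ together with the forbidden-path lemma (Lemma~\ref{lem:forbidden_path}) produces a forbidden path or an even hole through the previous level. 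More precisely, pick $a'\in N^-(a)$; by Corollary~\ref{col:xy_edge} applied successively to the edges $ab$, $bc$, $cd$ one controls how $a'$ propagates, and if $a'\notin N^-(d)$ one gets a forbidden path from $a'$ up to $a-b-c-d$ and back down to a neighbor of $d$ in $L_{i-1}$. So the first key step is: \emph{every level component is $(P_4,C_4)$-free}, hence has a king vertex $u$.

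The second step is to run an induction / peeling argument on the king vertex. Let $u$ be a king vertex of $H$, so $u$ is complete to $H-u$. If $H-u$ is itself a clique, then $H$ is a clique and we are done. Otherwise $H-u$ is disconnected or contains a non-edge; I would look at a non-adjacent pair $x,y\in H-u$. The claim I want is that $H-u$ has exactly two connected components, each a clique, and moreover these two cliques together with $\{u\}$ (and possibly more "universal" vertices) give the partition $A,B,K$. To see the "exactly two cliques" part I would again use claw-freeness at $u$: $u$ has a neighbor in $L_{i-1}$ (since $i\ge 1$ and $H\subseteq G_0$), so $N(u)$ cannot contain an induced $P_3$ that is also non-adjacent to that lower neighbor — wait, more carefully, $u$ together with a lower neighbor $u^-$ and two non-adjacent vertices of $H-u$ would be a claw centered at $u$ unless $u^-$ is adjacent to one of them; so any two non-adjacent vertices of $H-u$ have the same (or comparable) relation to $u^-$, and iterating this via Lemma~\ref{lem:clique} (which says $N^+$ is a clique, applied one level down) forces $H-u$ to have at most... hmm, I should instead directly show: the non-neighbors-graph on $H-u$ is "co-connected" into two cliques. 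Concretely, suppose $x,y$ and $x,z$ are both non-edges in $H-u$; then $\{u,x,y,z\}$ — if $y\not\sim z$ it's a claw at $u$? No: $u\sim x,y,z$ but we'd need $x\not\sim y$, $x\not\sim z$, $y\not\sim z$... Actually the cleaner route is: the complement of $H$ restricted to $H-u$, call it $\overline{H-u}$; a claw at $u$ needs three pairwise non-adjacent neighbors of $u$, i.e. a triangle in $\overline{H-u}$; so $\overline{H-u}$ is triangle-free. Combined with $C_4$-freeness of $H$ (no induced $C_4$ means $\overline{H-u}$ has no induced $2K_2$... not quite either). I would settle this by taking $H-u$, noting it is also $(P_4,C_4)$-free hence has its own king vertices in each component, and pushing the induction: each component of $H-u$ is a clique (induction on $|H|$, since a connected $(P_4,C_4)$-free graph which is "co-triangle-free in the sense of the claw condition" is a clique), and there are at most two of them by the claw argument centered at $u$ using its lower neighbor $u^-$.

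Once $H-u$ is known to be at most two cliques $A$ and $B$ with $A$ incomplete to $B$ (if there were only one clique we already concluded $H$ is a clique), I would assemble the $2$-clique: set $K$ to be $\{u\}$ together with every vertex of $A\cup B$ that happens to be complete to all of $H$, and let $A,B$ be what remains of the two cliques. Then $A\cup K$ and $B\cup K$ are cliques by construction ($A$ was a clique, $u$ is universal, and the moved vertices are universal), $A$ is incomplete to $B$ because we only removed from the original cliques vertices that were complete to the other side, and all three sets are nonempty ($K\ni u$; $A,B$ nonempty because $A\cup B$ was not a single clique and the universal vertices can't absorb a whole side without making $H$ a clique). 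This gives exactly $H = H(A,B,K)$, a $2$-clique.

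\textbf{Main obstacle.} I expect the delicate point to be the "at most two cliques" step — ruling out three or more pairwise-incomparable pieces in $H-u$. Claw-freeness at $u$ only forbids three pairwise non-adjacent neighbors of $u$, which bounds the independence number of $\overline{H-u}$'s relevant structure but not obviously the number of components; the real leverage must come from the interaction with the lower level $L_{i-1}$ via Lemma~\ref{lem:clique} and Corollary~\ref{col:xy_edge} (using that $N^-$ is linearly ordered by inclusion along edges), and getting that bookkeeping exactly right — especially handling vertices of $H$ whose $N^-$ is empty or that sit "between" the two cliques — is where the proof will need the most care. The $C_4$-free (even-hole-free) hypothesis and Lemma~\ref{lem:forbidden_path} are the tools I would lean on to close that gap.
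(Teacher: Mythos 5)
Your first step --- showing that $G_0[L_i]$ is $(P_4,C_4)$-free via Corollary~\ref{col:xy_edge} and then invoking Lemma~\ref{lem:king_vertex_general} to get a king vertex --- is exactly the paper's opening move, and although your sketch of the $P_4$-freeness argument is vaguer than the paper's (which runs cleanly through the chain $N^-(x)\cap N^-(z)=\emptyset$, hence $N^-(y)\supseteq N^-(x)\cup N^-(z)$, hence $N^-(w)\cap N^-(z)=\emptyset$, contradicting Corollary~\ref{col:xy_edge} on the edge $zw$), it is in the right spirit and repairable.

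The second half has a genuine gap, and it is precisely the one you flag yourself. Your intermediate claim that ``$H-u$ has exactly two connected components, each a clique'' is false as stated: if the component is a $2$-clique $C(A,B,K)$ with $|K|\geq 2$ and $u\in K$, then $C-u$ is still connected through the remaining king vertices and is not a disjoint union of cliques (e.g.\ $A=\{a\}$, $B=\{b\}$, $K=\{u,k'\}$ leaves the path $a-k'-b$). Even after peeling off \emph{all} king vertices, you give no argument bounding the number of pieces by two, and claw-freeness at $u$ alone cannot do it. The idea you are missing is the paper's choice of pivot: it takes two non-adjacent non-king vertices $a_0,b_0$ sharing a king neighbor $k$, uses Corollary~\ref{col:xy_edge} to find a vertex $u\in N^-(k)\cap N^-(b_0)$ \emph{in the level below}, notes $u$ is non-adjacent to $a_0$ by Lemma~\ref{lem:clique}, and then defines $A$ as the non-neighbors of $u$ in $C$ and $B$ as the remaining non-king vertices. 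With that definition everything falls out in one line each: $B$ is a clique because $N^+(u)$ is a clique (Lemma~\ref{lem:clique}); $A$ is a clique because two non-adjacent vertices of $A$ together with $u$ would form a claw centered at $k$; and $A$ incomplete to $B$ follows from $(P_4,C_4)$-freeness. So the ``real leverage from $L_{i-1}$'' you correctly suspect is needed enters not through bookkeeping of the ordered family $\{N^-(\cdot)\}$ along a path, but by fixing a single lower-level vertex and partitioning the component by adjacency to it; without that device your argument does not close.
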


\begin{proof}
	First, we claim that the graph $G_0[L_i]$ is $P_4$-free.
	To see this, suppose that there is an induced path \( x-y-z-w \) in \( L_i \).
		Since \( x\) is non-adjacent to \( z \), by Lemma~\ref{lem:clique}, \( N^-(x) \cap N^-(z) = \emptyset \).
		Now, since \( y \) is adjacent to both   \( x\) and \( z \), by Corollary~\ref{col:xy_edge}, \( N^-(y) \supset N^-(x) \cup N^-(z) \).	
		But since \( y\) is non-adjacent to  \( w \), \( N^-(y) \cap N^-(w) = \emptyset \), which means \(  N^-(w) \cap N^-(z) = \emptyset \), contradicting Corollary~\ref{col:xy_edge}. 
Therefore, $G_0[L_i]$ is $(P_4,C_4)$-free, and thus, by Lemma~\ref{lem:king_vertex_general}, each of its level components has a king vertex.
	
	Let \( C\) be a level component of \( G_0[L_i] \) and \( K \subseteq V(C) \) be the set of king vertices of \( C \).
	If $K=V(C)$, then $C$ is a clique, and we are done. 
So, suppose that \( K \neq V(C) \), which means there are at least two non-adjacent vertices \( a_0,b_0 \in V(C) \setminus K \) that are adjacent to a vertex \( k \in K \).
	
	By Corollary~\ref{col:xy_edge}, there is a vertex \( u \in N^-(k) \cap N^-(b_0) \).
	By Lemma~\ref{lem:clique},  \( u\) is non-adjacent to \( a_0 \). 
Define \( A = \{x \in V(C) \mid x\text{ is non-adjacent to }u \} \) and \( B = V(C) \setminus (A \cup K) \).
 Since \(u\) is complete to \(B\), by  Lemma~\ref{lem:clique}, \( B \) is a clique.
	Also, \( A \) is a clique, since otherwise, there must be two non-adjacent vertices \( a, a' \in A \) and \( \{ k, a, a', u \} \) would induce a claw, a contradiction.

	Also, $A$ is incomplete to $B$. 
To see this, suppose that \( a \in A \) is adjacent to \( b \in B \). 
Since \( a,b \notin K \), there are vertices \( c \in A \) and \( d \in B \) such that \( b\) is non-adjacent to \( c \) and \( d\) is non-adjacent to \( a \).
		Then $G_0$ induces a \( P_4 \) or a \( C_4 \) on  \( \{ d, b, a, c \} \) which is in contradiction with the fact that $G_0[L_i]$ is $(P_4,C_4)$-free.
This proves that $C = C(A,B,K)$ is a $2$-clique.
\end{proof}

The next lemma determines the connections between two consecutive levels $L_{i-1}$ and $L_{i}$.

\begin{lem} \label{lem:backward_neighbours}
	Let \( C \) be a level component in $L_i$. 
Also, let $U\subseteq L_{i-1}$ be the set of all vertices in $L_{i-1}$ which has a neighbor in $C$.
	
	\begin{itemize}
		\item[\rm (i)] If $C=C(A,B,K)$ is a $2$-clique,	then for every \( u \in U \), we have either	\( N^+(u)= A \cup K\), or  \(N^+(u)= B \cup K\).
		\item[\rm (ii)]
			If \( C \) is a clique, then there is an ordering on the vertices in $U$, say $u_1,u_2,\ldots, u_\ell$, such that 
		\[N^+(u_1)\subseteq N^+(u_2)\subseteq \cdots \subseteq N^+(u_\ell)=C.\]	
	\end{itemize}
\end{lem}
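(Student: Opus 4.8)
The plan is to derive both parts from the two facts already available: that ``upward'' neighbourhoods are cliques (Lemma~\ref{lem:clique}) and that there is no forbidden path (Lemma~\ref{lem:forbidden_path}, together with Corollary~\ref{col:xy_edge}). I first record a preliminary used in both parts: for every $u\in U$ one has $N^+(u)=N_C(u)$, i.e.\ $u$ has no neighbour in $L_i\setminus C$. This is trivial when $i=1$ (then $L_1=\{u_1\}$), so assume $i\ge 2$; then $u\in L_{i-1}$ has a neighbour $u_-\in L_{i-2}$, and if $u$ had neighbours $w_1\in C$ and $w_2\in L_i\setminus C$, then $w_1,w_2$ would lie in different level components, so $w_1w_2\notin E$ and $\{u;w_1,w_2,u_-\}$ would be a claw. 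Hence $u$ meets $L_i$ only inside $C$, and by Lemma~\ref{lem:clique} the set $N_C(u)=N^+(u)$ is a clique.

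For part (ii), I claim that $\{N_C(u):u\in U\}$ is linearly ordered by inclusion. If not, pick $u,u'\in U$ with $w\in N_C(u)\setminus N_C(u')$ and $w'\in N_C(u')\setminus N_C(u)$; since $C$ is a clique, $ww'\in E$, so $P: u-w-w'-u'$ is a path of length three with $u,u'\in L_{i-1}$ and $w,w'\in L_i$. Here $uw'\notin E$ and $wu'\notin E$ (since $w'\notin N_C(u)$ and $w\notin N_C(u')$), so if $uu'\notin E$ then $P$ is induced, and if $uu'\in E$ then $P+uu'$ is a chordless $4$-cycle, hence a hole; either way $P$ is a forbidden path, contradicting Lemma~\ref{lem:forbidden_path}. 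So we may list $U=\{u_1,\dots,u_\ell\}$ with $N_C(u_1)\subseteq\cdots\subseteq N_C(u_\ell)$; and $N_C(u_\ell)=C$, because each $v\in C\subseteq L_i$ has a backward neighbour $u$, which then lies in $U$ with $v\in N_C(u)\subseteq N_C(u_\ell)$. Since $N^+(u)=N_C(u)$, this is (ii).

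For part (i) write $C=C(A,B,K)$ and fix $u\in U$; observe that every $k\in K$ is adjacent to all of $V(C)\setminus\{k\}$, as $A\cup K$ and $B\cup K$ are cliques. Since $N_C(u)$ is a clique and $A$ is incomplete to $B$, either $N_C(u)\subseteq A\cup K$ or $N_C(u)\subseteq B\cup K$; assume the former. I then prove three things. \emph{(a)} $N_C(u)\cap A\neq\emptyset$: otherwise $N_C(u)\subseteq K$, and taking $k\in N_C(u)$, $a\in A$, $b\in B$, the set $\{k;a,b,u\}$ would be a claw ($k$ is adjacent to $a,b,u$ and $a,b,u$ are pairwise non-adjacent). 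Fix $a_0\in A\cap N_C(u)$. \emph{(b)} $K\subseteq N_C(u)$: if $uk\notin E$ for some $k\in K$, then from the edge $a_0k$ and $u\in N^-(a_0)\setminus N^-(k)$, Corollary~\ref{col:xy_edge} yields $N^-(k)\subseteq N^-(a_0)$; picking $b\in B$, the edge $kb$ makes $N^-(k),N^-(b)$ comparable by Corollary~\ref{col:xy_edge}, while $a_0b\notin E$ forces $N^-(a_0)\cap N^-(b)=\emptyset$ via Lemma~\ref{lem:clique}, hence $N^-(k)\cap N^-(b)=\emptyset$; two comparable disjoint sets (one of which must then be empty) contradict the fact that $k,b\in L_i$ have backward neighbours. \emph{(c)} $A\subseteq N_C(u)$: if $uv\notin E$ for some $v\in A$, pick $b\in B$ and $k\in K$; then $\{k;u,v,b\}$ is a claw, since $k$ is adjacent to $u$ (by (b)), $v$ and $b$, while $uv\notin E$, $ub\notin E$ (as $b\in B$ and $N_C(u)\subseteq A\cup K$) and $vb\notin E$ (as $A$ is incomplete to $B$). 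By (b) and (c), $A\cup K\subseteq N_C(u)\subseteq A\cup K$, so $N_C(u)=A\cup K$; symmetrically the other case gives $N_C(u)=B\cup K$. As $N^+(u)=N_C(u)$, this is (i).

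The claw verifications in (a) and (c) and the forbidden-path argument in (ii) are routine; the step I expect to require real care is (b), where Corollary~\ref{col:xy_edge}, Lemma~\ref{lem:clique} and the non-emptiness of backward neighbourhoods have to be combined in exactly the right order. Throughout one must also keep careful track of the level containing each vertex, since Lemma~\ref{lem:clique} applies only to a vertex that possesses a backward neighbour.
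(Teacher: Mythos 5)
Your proof is correct, and its overall architecture coincides with the paper's: first observe $N^+(u)\subseteq C$, prove (ii) by the forbidden path $u-w-w'-u'$ plus a maximal element, and prove (i) by showing $N^+(u)$ meets $A\cup B$, contains $K$, and is complete to one of $A,B$ and incomplete to the other via claw arguments. The one place you genuinely diverge is the sub-step $K\subseteq N^+(u)$: the paper takes a common backward neighbour $v$ of $k'$ and $b$ (via Corollary~\ref{col:xy_edge}) and exhibits the forbidden path $u-a-k'-v$, whereas you apply Corollary~\ref{col:xy_edge} twice (to the edges $a_0k$ and $kb$) and combine the resulting inclusion with the disjointness $N^-(a_0)\cap N^-(b)=\emptyset$ to force $N^-(k)$ or $N^-(b)$ to be empty. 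Both are valid; your version avoids constructing and checking a forbidden path at that point, at the cost of a slightly more delicate bookkeeping of comparable and disjoint backward neighbourhoods, and it leans on the fact (worth stating explicitly, as you do) that every vertex of $L_i$ with $i\geq 1$ has a nonempty backward neighbourhood.
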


\begin{proof}
	First, note that, by Lemma~\ref{lem:clique}, every $u\in U$ has no neighbor in other level components in $L_i$, i.e. $N^+(u)\subset C$.
\begin{enumerate}
	\item[(i)] Suppose that $C=C(A,B,K)$ is a $2$-clique. 
Note that  $u$ has a neighbor in $A\cup B$, since if $u$ is incomplete to $A\cup B$, then $u$ has a neighbor $k\in K$, and for every $a\in A$ and $b\in B$, $G_0$ induces a claw on $\{k,u,a,b\}$. 
So, without loss of generality, suppose that $u$ is adjacent to some vertex $a\in A$. 
Also, let $b$ be a vertex in $B$.
	Now, we prove that $K\subset N^+(u)$. 
To see this, suppose that $u$ is non-adjacent to a vertex $k' \in K$. 
Then, by Corollary~\ref{col:xy_edge}, $k'$ and $b$ has a common neighbor $v$ in $L_{i-1}$. 
Since $a$ is non-adjacent to $b$, by Lemma~\ref{lem:clique}, $a$ is also non-adjacent to $v$. 
Thus, $u-a-k'-v$ would be a forbidden path, contradicting Lemma~\ref{lem:forbidden_path}.
This proves that $u$ is complete to $K$.

	Now, we prove that \( u \) is complete to \( A \) and incomplete to \( B \). 
To see this, note that since \( A \) is incomplete to \( B \) and $u$ is adjacent to $a\in A$, by Lemma~\ref{lem:clique}, \( u \) is incomplete to \( B \).
		On the other hand, if $u$ is non-adjacent to some vertex $a'\in A$, then \( \{ k, a', b, u \} \) would be a claw.
This completes the proof of (i).

	\item[(ii)] Suppose that $C$ is a clique. 
We prove that for every $u,v\in U$, we have either $N^+(u)\subseteq N^+(v)$, or $N^+(v)\subseteq N^+(u)$.
For the contrary, assume that there are two vertices \( a, b \in C \) such that \( a \in N^+(u) \setminus N^+(v) \) and \( b \in N^+(v) \setminus N^+(u) \). 
Therefore,  \(u - a - b - v \) would be a forbidden path, contradicting Lemma~\ref{lem:forbidden_path}.
This proves that the inclusion induces a total ordering on $\{N^+(u), u\in U \}$.

Now, let \( u_\ell \in U \) be the vertex with the largest number of neighbors in $L_i$ among all vertices in $U$. 
If \( N^+(u_\ell) \neq C \), then \(u_\ell \) is non-adjacent to a vertex \( x \in C \).
The vertex $x$ has a neighbor $u\in U$, so  \( N^+(u_\ell) \subsetneq N^+(u) \), which is in contradiction with the choice of \( u_\ell \).
\end{enumerate}
\end{proof}

\section{Holes in (Claw, Even-hole)-free Graphs}\label{sec:holes}

In this section, we study the structure of holes in the graph $G_0$.
Let $H$ be a hole in $G_0$. 
The first and last levels $L_{i_0}$ and $L_{j_0}$ which intersect $V(H)$ is called the \textit{first and last level} of $H$, i.e. $i_0=\min\{i: L_i\cap V(H)\neq \emptyset\}$ and $j_0=\max\{i: L_i\cap V(H)\neq \emptyset\}$. 
Also, all levels $L_i$, $i_0<i<j_0$, are called \textit{inner levels} of $H$.
For every level $L_j$, $i_0\leq j\leq j_0$, a level component in $L_j$ that contains some vertices of $H$ is called a \textit{component} of $H$ in $L_j$.
Also, define
\[\tau(H)=\{C: C\text{ is a component of }H \text{ in $L_j$ for some } i_0+1 \leq j \leq j_0 \}. 
\] 

Note that, in the definition of $\tau(H)$, we have excluded the components of $H$ in its first level. 
In the next two lemmas, we prove that every hole $H$ in $G_0$ has a specific structure with respect to the levels $L_i$'s (see Figure~\ref{fig:hole_forms}).
\begin{lem} \label{lem:edge_in_levels_H}
Let \( H \) be a hole in the graph \( G_0 \).
The hole $H$,
	\begin{itemize}
		\item[\rm (i)] has no edge with both endpoints in a level $L_j$, for some $j \geq i_0 + 1$, and

		\item[\rm (ii)] has exactly one edge with both endpoints in its first level.
	\end{itemize}
\end{lem}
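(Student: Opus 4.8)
Since $G$ is even-hole-free, $H$ is an odd hole, so $|V(H)|$ is odd. I would first record two reductions. The vertex $u_0$ has degree at most $1$ in $G_0$, while every vertex of a hole has degree $2$, so $u_0\notin V(H)$ and $i_0\ge 1$; in particular Lemma~\ref{lem:clique} is available for every vertex of $H$ and every level met by $H$. Also $j_0\ge i_0+1$, for otherwise $V(H)$ would lie inside a single level component of $L_{i_0}$, which by Lemma~\ref{lem:cc_form} is a clique or a $2$-clique and hence $(P_4,C_4)$-free (as shown in the proof of Lemma~\ref{lem:cc_form}), so it contains no hole. Next, traversing $H$ cyclically, its vertices split into $p\ge 1$ maximal \emph{runs} contained in $L_{i_0}$ alternating with $p$ maximal \emph{arcs} that avoid $L_{i_0}$. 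Since the two $H$-endpoints of an arc are adjacent to run-vertices but are not in $L_{i_0}$, they lie in $L_{i_0+1}$; hence extending an arc by its two flanking run-vertices produces a path $x_1-\cdots-x_k$ with $x_1,x_k\in L_{i_0}$, $x_2,x_{k-1}\in L_{i_0+1}$ and all other vertices in $L_{\ge i_0+1}$, which is induced (when $k<n$) or completes to a hole via $x_1x_k$ (when $k=n$) --- exactly the setting of Lemma~\ref{lem:forbidden_path}.

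\textbf{Run sizes and a parity count.} If some run is a single vertex $z$, then its two flanking arc-top vertices both lie in $N^+(z)$, which is a clique by Lemma~\ref{lem:clique}, so they are adjacent; since each arc has an internal vertex (otherwise $H$ is a triangle), they are at $H$-distance at least $3$, so this edge is a chord of $H$, a contradiction. Hence every run has size $\ge 2$, and since $G_0[L_{i_0}]$ is $(P_4,C_4)$-free every run, being an induced path there, has size $\le 3$. By Lemma~\ref{lem:forbidden_path} none of the extended-arc paths above is a forbidden path, so each has even length. Counting the edges of $H$ around the cycle and using that $|V(H)|$ is odd then forces the number of edges of $H$ with both ends in $L_{i_0}$ to be \emph{odd}, in particular at least one --- this is the lower bound in (ii). Moreover, if $p=1$ the single run has size in $\{2,3\}$ and contributes an odd number of $L_{i_0}$-edges, so its size is $2$ and it contributes exactly one.

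\textbf{No horizontal edge above $L_{i_0}$, and $p=1$.} Suppose $H$ has an edge $xy$ with $x,y\in L_j$ for some $j\ge i_0+1$; let $P^j$ be the maximal subpath of $H$ contained in $L_j$ through $xy$ (so $P^j$ has $2$ or $3$ vertices, again by $(P_4,C_4)$-freeness of $G_0[L_j]$). By Corollary~\ref{col:xy_edge} I may assume $N^-(x)\subseteq N^-(y)$, so the $H$-neighbour of $x$ off $P^j$ is not in $L_{j-1}$ (otherwise it lies in $N^-(y)$, giving a chord). If the $H$-neighbours of the two ends of $P^j$ both lie in $L_{j-1}$, then together with $P^j$ they induce a short path on which Corollary~\ref{col:xy_edge} --- and, when $|P^j|=3$, the disjointness of the down-neighbourhoods of non-adjacent vertices in a level used in the proof of Lemma~\ref{lem:cc_form} --- produces a chord of $H$, a contradiction. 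In the remaining cases the ends of $P^j$ and the arcs of $H$ attached to them reach up from $L_{j-1}$ into a level component $C$ of $L_j$; here I would apply Lemma~\ref{lem:backward_neighbours} to $C$ and $L_{j-1}$, which pins down $N^+(u)$ for each $u\in L_{j-1}$ with a neighbour in $C$, and then match these sets against the edges of $H$ incident to $C$ to extract either a chord of $H$ or an induced $4$-cycle, i.e.\ an even hole. The identical argument applied to the arc joining any two consecutive runs (whose run-end vertices are non-adjacent vertices of $L_{i_0}$ with arc-top neighbours in $L_{i_0+1}$) rules out $p\ge 2$. Combined with the parity count this forces $p=1$ and run-size $2$: exactly one edge of $H$ inside $L_{i_0}$ and none above, proving both (i) and (ii).

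\textbf{Expected obstacle.} The reductions, the run-size bounds, and the parity count are routine. The real work is the last step: converting ``a horizontal edge above $L_{i_0}$'' or ``more than one run'' into a concrete forbidden configuration. The difficulty is that the plateau $P^j$ (or the pair of runs) may spread over more than one level component and the incident arcs can attach to it in several ways, so one must combine Lemmas~\ref{lem:clique}, \ref{lem:cc_form} and \ref{lem:backward_neighbours} to control every attachment; in each scenario the payoff is a chord of $H$ or an induced $C_4$, but extracting it cleanly needs the full level-component structure developed in Section~\ref{sec:structure}.
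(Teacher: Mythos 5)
Your setup (the run/arc decomposition, the bound that runs have size $2$ or $3$, and the parity count showing that the number of edges of $H$ inside $L_{i_0}$ is odd) is sound, and the parity step matches the paper's first observation in part (ii). But the core of the lemma is part (i), and there your proposal stops at a plan rather than a proof: the paragraph beginning ``In the remaining cases'' says you \emph{would} apply Lemma~\ref{lem:backward_neighbours} and then ``match these sets against the edges of $H$ incident to $C$ to extract either a chord of $H$ or an induced $4$-cycle.'' That is precisely the step you yourself flag as ``the real work,'' and it is not carried out. The same unexecuted argument is then invoked a second time to rule out $p\ge 2$, which is genuinely needed, since your parity count alone is consistent with, say, three runs of size two. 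So both statement (i) and the upper bound in (ii) remain unproved.

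The missing idea is the one the paper's proof is built on: choose the horizontal edge $xy$ in the \emph{highest} level $L_i$ containing an edge of $H$. Then the connected component $P$ of $H[V(H)\cap L_{\ge i}]$ containing $xy$ is a single subpath of $H$, and its attachments to the rest of $H$ necessarily descend into $L_{i-1}$; extending (a suitable subpath of) $P$ at both ends by one vertex of $L_{i-1}$ --- an $H$-neighbour, or a common lower neighbour supplied by Corollary~\ref{col:xy_edge} when the relevant end sits on a second horizontal edge --- yields a forbidden path, contradicting Lemma~\ref{lem:forbidden_path}. Your local analysis around an arbitrary level $L_j$ cannot avoid arcs of $H$ that climb above $L_j$, which is exactly the case your sketch leaves open; maximality of $j$ eliminates it. Likewise, for (ii) the paper does not reduce to a chord or a $C_4$ but builds one explicit forbidden path $x''-x-P^x-y'-y-P^y-z'-z''$ from three first-level edges, with $x'',z''\in L_{i_0-1}$ obtained from Corollary~\ref{col:xy_edge}; some such concrete construction is what your proposal still needs.
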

\begin{proof}
	\begin{itemize}
	\item[(i)]
Let \( i \) be the largest integer such that \( L_i \) contains an edge of \( H \), say \( xy \), and suppose that \( L_i \) is not the first level of \( H \).
	Orient the cycle \( H \) in a direction such that the edge $xy$ is directed from \(x\) to \(y\). 
Let $S=V(H)\cap L_{\geq i}$ and consider $H[S]$ the induced subgraph of $H$ on $S$. 
Now, define \( P \) as the connected component of $H[S]$ containing the edge $xy$ (note that $P$ is a subpath of $H$ which lies in $L_{\geq i}$).	

First, suppose that there is no edge in $H$ adjacent to $xy$ with both endpoints in $L_i$.  	
	Now, if $P$ does not start from $x$, then let \( x' \) be the last vertex of \( P \) in \( L_i \)  that is before \( x \) and let $x''$ be an arbitrary neighbor of $x'$ in $L_{i-1}$ and if \(P\) starts from \(x\), then define \(x'=x\) and $x''$ be the  neighbor of \( x' \) in \( L_{i-1}\cap V(H) \). Similarly define \( y' \) as the first vertex of \( P \) in \( L_i \) after \( y \) and $y''$ as an arbitrary neighbor of $y'$ in $L_{i-1}$ (if \(P\) ends in  \(y\), then define \(y'=y\) and \( y'' \) as the neighbor of \( y' \) in \( L_{i-1}\cap V(H) \)). 
Now, \(P' = x'' - x' - P - y' - y''\) is a forbidden path, contradicting Lemma~\ref{lem:forbidden_path}. 
To see this, note that $x''$ is incomplete to $V(P')\setminus \{x', y''\}$, because if $x'=x$, then $x''\in V(H)$ and if $x'\neq x$ and $x''$ is adjacent to a vertex in $u\in V(P')\setminus \{x', y''\}$, then $u$ is adjacent to $x'$ which is in contradiction with the fact that $H$ is a hole.
With a similar discussion, $y''$ is incomplete to $V(P')\setminus \{x'', y'\}$.

Finally, suppose that there is another edge of \( H \) adjacent to the edge \( xy \) in \( L_i \) and, without loss of generality, name this edge as \( yz \). Define $x'$ and $x''$ exactly as above and let $y'=y$ and $y''$ be a common neighbor of  \( y \) and \( z \) in $L_{i-1}$ which exists by Corollary~\ref{col:xy_edge}. 
Hence, again $P'= x'' - x' - P - y' - y''$ is a forbidden path, a contradiction.
	
	\item[(ii)]
	By (i), \( H \) has an odd number of edges in its first level \( L_{i_0} \).
	Suppose that \( H \) has more than one edge in its first level.
	Therefore, \( H \) has at least \( 3 \) edges in \( L_{i_0} \).
	So, \( H \) has a subpath of the form
	\[
	x' - x - P^x - y' - y - P^y - z' - z,
	\]
	where \( V(P^x), V(P^y) \subset L_{>i_0} \) and \( x', x, y', y, z', z \in L_{i_0} \).	
	
	The vertices \( z' \) and \(z \) have a common neighbor \(z''\) in \( L_{i_0-1} \) and, the vertices $x$ and $x'$ have a common neighbor $x''$ in $L_{i_0 - 1}$
	Now, the path \(P= x'' - x - P^x - y' - y - P^y - z' - z'' \) is a forbidden path, because $P\setminus \{x'',z''\}$ is a subpath of $H$ and so is induced. 
Also, due to Lemma~\ref{lem:clique}, $x''$ and $z''$ are both non-adjacent to inner vertices of $P\setminus \{x'',z''\}$.
	\end{itemize}
\end{proof}

\begin{figure}[t]
\centering

\begin{tikzpicture}

\draw[fill=black] (0,0) circle (2pt);
\draw[fill=black] (0,2) circle (2pt);
\draw[fill=black] (3,-.5) circle (2pt);
\draw[fill=black] (3,2.5) circle (2pt);

\draw[fill=black] (9,-.5) circle (2pt);

\draw[fill=black] (9,2.5) circle (2pt);

\draw[fill=black] (12,1) circle (2pt);



\node at (12,-2) {\LARGE \( L_{j_0} \)};
\node at (9,-2) {\LARGE \( L_{j_0-1} \)};

\node at (6,-2) {\LARGE \( \cdots \)};
\node at (6,-.5) {\LARGE \( \cdots \)};
\node at (6,2.5) {\LARGE \( \cdots \)};

\node at (3,-2) {\LARGE \( L_{i_0+1} \)};
\node at (0,-2) {\LARGE \( L_{i_0} \)};


\draw[thick] (5,-.5) -- (3,-.5) -- (0,0) -- (0,2) -- (3,2.5) -- (5,2.5);
\draw[thick] (7,2.5) -- (9,2.5) -- (12,1) -- (9,-.5) -- (7,-.5);

\draw[draw=none, fill=gray, fill opacity = 0.4, even odd rule, rounded corners=2mm ] (-1,3) rectangle ++(2,-4);
\draw[draw=none, fill=gray, fill opacity = 0.4, even odd rule, rounded corners=2mm] (2,3) rectangle ++(2,-4);
\draw[draw=none, fill=gray, fill opacity = 0.4, even odd rule, rounded corners=2mm] (8,3) rectangle ++(2,-4);
\draw[draw=none, fill=gray, fill opacity = 0.4, even odd rule, rounded corners=2mm] (11,3) rectangle ++(2,-4);

\end{tikzpicture}

\caption{The structure of holes in $G_0$ with respect to the levels $L_i$'s. 
}\label{fig:hole_forms}
\end{figure}
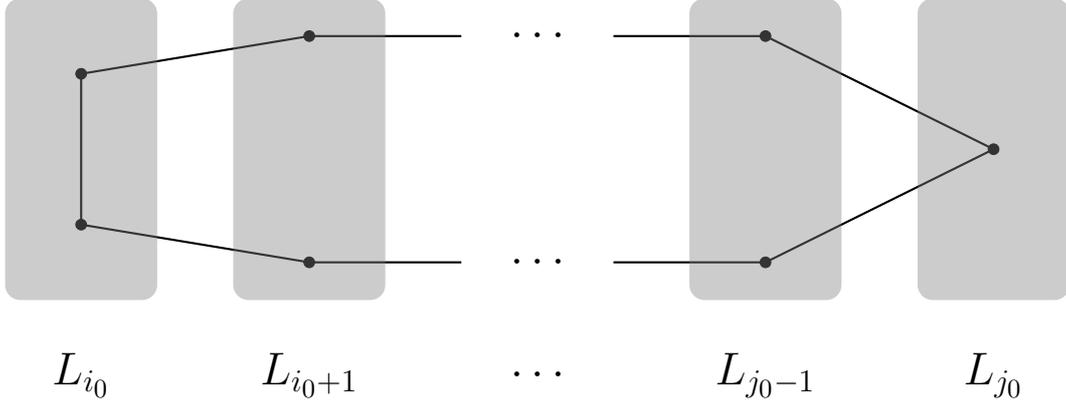

\begin{lem} \label{lem:structure_H}
	The hole \( H \) has exactly one vertex in its last level and exactly two vertices in its other levels.
\end{lem}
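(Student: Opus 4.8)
The plan is to read off the structure of $H$ directly from the two facts in \pref{lem:edge_in_levels_H}: the hole $H$ has a unique edge inside a single level, namely one edge $x_1x_2$ with both endpoints in its first level $L_{i_0}$, and no edge of $H$ lies inside any level $L_j$ with $j\ge i_0+1$. So first I would delete that edge and set $P:=H-x_1x_2$; this is a path with vertex set $V(H)$ running from $x_1$ to $x_2$. The key observation is that \emph{every} edge of $P$ joins vertices in consecutive levels: adjacent vertices of $G_0$ always lie in levels differing by at most one, by the definition of the levels; and if an edge of $P$ had both endpoints in one level $L_k$, then \pref{lem:edge_in_levels_H}(i) would force $k=i_0$, and then \pref{lem:edge_in_levels_H}(ii) would force this edge to be $x_1x_2$ itself, which is not in $P$ -- a contradiction.

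Consequently, the sequence of levels of the vertices read along $P$ is a walk on the integers in which every step is $+1$ or $-1$, starting and ending at $i_0$. Since $L_{i_0}$ and $L_{j_0}$ are the first and last levels of $H$, this walk stays inside $\{i_0,\dots,j_0\}$, both endpoints attain the minimum value $i_0$, and the value $j_0$ is attained (because $L_{j_0}\cap V(H)\neq\emptyset$). I would next show the walk has \emph{no interior local minimum}. A ``plateau'' local minimum is already excluded, since it would be a level-preserving edge of $P$, which we ruled out. A strict interior local minimum at a vertex $v$ (necessarily distinct from the endpoints $x_1,x_2$) would mean that both neighbours of $v$ on $P$ lie one level above $v$, i.e.\ both lie in $N^+(v)$; but $N^+(v)$ is a clique by \pref{lem:clique}, so these two neighbours are adjacent. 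They are the two neighbours of $v$ on the cycle $H$, hence non-consecutive on $H$ (which has length at least four), so this adjacency is a chord of $H$ -- contradicting that $H$ is a hole.

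Finally, a $\{-1,+1\}$-step closed walk whose endpoints attain its minimum and which has no interior local minimum is unimodal: it increases strictly to a single peak and then decreases strictly back down, and the peak is attained exactly once. The peak value equals the maximum level met, which is $j_0$. Hence the level sequence along $P$ is exactly $i_0,\,i_0+1,\,\dots,\,j_0-1,\,j_0,\,j_0-1,\,\dots,\,i_0+1,\,i_0$, and counting how often each value occurs gives: $L_{j_0}$ contains exactly one vertex of $H$, $L_{i_0}$ contains exactly two (namely $x_1$ and $x_2$), and every level strictly between contains exactly two. That is the claim.

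I expect the only real content to lie in the ``no interior local minimum'' step, where claw-freeness enters through \pref{lem:clique}; the point to be careful about is cleanly separating the plateau case (killed by the no-intra-level-edge observation on $P$) from the strict case (killed by the clique-gives-a-chord argument), together with noticing that the single top level has multiplicity one while the bottom level has multiplicity two precisely because the bottom level is the one hosting the unique intra-level edge of $H$. Everything else is elementary bookkeeping for $\pm1$ walks.
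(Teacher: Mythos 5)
Your proof is correct and takes essentially the same route as the paper: your ``no interior local minimum'' step is exactly the paper's Claim~1 (a strict local minimum $v$ would put both $H$-neighbours of $v$ in the clique $N^+(v)$ of Lemma~\ref{lem:clique}, creating a chord, while a plateau is excluded by Lemma~\ref{lem:edge_in_levels_H}), and your unimodal-walk bookkeeping is just a repackaging of the paper's descent from the top vertex along two arms. The only organizational difference is the direction of travel --- you delete the unique first-level edge and walk upward, whereas the paper starts at the last level and walks down, finishing by showing the two bottom vertices are adjacent --- so no further comparison is needed.
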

\begin{proof}
	First, we prove a claim.
	\begin{subclaim}\label{claim:subpath}
	Let $xyz$ be a subpath of the hole \(H\) such that $y\in L_i$, for some $i$, and $z\in L_{i+1}$. 
Then, if $i$ is not the first level of $H$, then $x\in L_{i-1}$ and otherwise $x\in L_i$.
	\end{subclaim}
To prove the claim, note that if $x\in L_{i+1}$, then by Lemma~\ref{lem:clique}, $x$ and $z$ are adjacent, which is a contradiction with the fact that $H$ is a hole. 
Also, if $i$ is not the first level, $x$ cannot be in $L_i$  due to Lemma~\ref{lem:edge_in_levels_H}-(i).
This proves the claim.
\\

Now, we prove the lemma. 
Let $x_0$ be a vertex of $H$ in its last level $L_{j_0}$. 
Then, by Lemma~\ref{lem:edge_in_levels_H}-(i), the two neighbors $x_1,x'_1$ of $x_0$ in $H$ are in $L_{j_0-1}$. 
Now, by Claim~\ref{claim:subpath} and Lemma \ref{lem:edge_in_levels_H}-(ii), the neighbors of $x_1$ and $x'_1$ in $V(H)\setminus\{x_0\}$ are in $L_{j_0-2}$. 
Finally, we have two distinct vertices $x_j,x'_j$ of $H$ in its first level $L_{i_0}$. 
We claim that $x_j$ and $x'_j$ are adjacent, which completes the proof. 
If $x_j$ and $x'_j$ are not adjacent, then $x_j$ and $x'_j$ have  neighbors $x_{j+1},x'_{j+1}$, respectively, in $V(H)\setminus \{x_0,x_1,\ldots, x_{j-1} \}\cup \{x'_1,\ldots,x'_{j-1}\}$ (that might be the same). 
By Claim~\ref{claim:subpath}, $x_{j+1}$ and $x'_{j+1}$ are in first level of $H$. 
So, $H$ contains at least two edges $x_jx_{j+1}$ and $x'_jx'_{j+1}$ in its first level which is a contradiction with Lemma~\ref{lem:edge_in_levels_H}-(ii). 
This completes the proof.
\end{proof}

Lemma~\ref{lem:structure_H} reveals the structure of the hole $H$ with respect to the levels $L_i$'s. 
So, $H$ has a configuration as Figure~\ref{fig:hole_forms}.

A hole $H$ is called \textit{dominated} if there is a vertex $v$ in $V\setminus V(H)$ such that $v$ is complete to $V(H)$. 
It is clear that in every claw-free graph, the only dominated hole is a cycle $C_5$ which is the induced subgraph of a wheel $W_5$. 
The next lemma states that every non-dominated hole $H$ has exactly one component in its first and last level and exactly two components in other levels. 
So, if \( H \) is not dominated, then \( |\tau(H)|= \ell-2 \), where $\ell$ is the length of $H$, and if $H$ is dominated, then $|\tau(H)|=2$.

\begin{lem} \label{lem:$2$-clique_C5}
	Let $H$ be a hole in $G_0$.
	If there is a component \( C \in \tau (H) \) in an inner level \( L_i \) such that \( V(H) \cap L_i \subset C \), then \( H \) is a \( C_5 \) which is dominated by a king vertex \( k \) of \( C \).
\end{lem}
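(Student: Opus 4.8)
I will show that some king vertex $k$ of $C$ is complete to $V(H)$; this already gives the lemma, because then $H$ is a dominated hole of the claw-free graph $G$, so $H=C_5$ (the only dominated hole of a claw-free graph), and $k$ dominates it. Since $L_i$ is an inner level of $H$, \pref{lem:structure_H} gives $V(H)\cap L_i=\{x,x'\}$ with $x\neq x'$; and since $L_i$ is not the first level of $H$, \pref{lem:edge_in_levels_H}(i) yields $xx'\notin E(H)$, so $x\not\sim x'$ in $G_0$ as $H$ is chordless. As $x,x'\in C$, the component $C$ is not a clique, hence by \pref{lem:cc_form} it is a $2$-clique $C=C(A,B,K)$. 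The set $K$ is complete to $C$, so $x,x'\notin K$; and as $A,B$ are cliques we may take $x\in A$, $x'\in B$. Fix any $k\in K$, so $k\sim x$ and $k\sim x'$. Because $i$ is an inner level, $x$ has exactly one $H$-neighbour $p\in L_{i-1}$ and one $H$-neighbour $q\in L_{i+1}$, and likewise $p',q'$ for $x'$ (with $q=q'$ precisely when $L_{i+1}$ is the last level of $H$). By \pref{lem:backward_neighbours}(i) applied to $C$ and $L_{i-1}$: since $p$ has the neighbour $x\in A$ inside $C$, we get $N^+(p)=A\cup K$, hence $k\sim p$; symmetrically $k\sim p'$.

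\emph{$k$ reaches the level above.} Let $P$ and $Q$ be the two arcs of $H$ between $x$ and $x'$, with $P$ passing through $L_{i+1}$ (and the higher levels) and $Q$ through $L_{i-1}$ (and the lower levels); here $q,q'$ are internal to $P$ and $p,p'$ are internal to $Q$. Since $G$ is even-hole-free, $|V(H)|$ is odd, while $Q$ has odd length, as it descends to $L_{i_0}$, crosses the unique first-level edge of $H$ (\pref{lem:edge_in_levels_H}), and ascends back. Hence $|P|=|V(H)|-|Q|$ is even, so adjoining $k$ to $P$ with the edges $kx,kx'$ yields a cycle of even length $\ge 4$. As $G$ is even-hole-free this cycle is not chordless, and since $P$ is chordless every chord meets $k$; because $k\in L_i$ has no neighbour in $L_{\ge i+2}$, such a chord hits an internal vertex of $P$ lying in $L_{i+1}$, i.e. $k\sim q$ or $k\sim q'$.

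\emph{The crux.} It remains to show that $H$ does not leave $L_{i-1}\cup L_i\cup L_{i+1}$; I expect this to be the main obstacle. The idea is induction on $|V(H)|$ combined with even-hole-freeness: if $Q$ had an internal vertex in $L_{i-2}$, then $k$ together with $Q$ minus its endpoints $x,x'$ would be a chordless odd cycle shorter than $H$ — its internal vertices lie in $L_{\le i-2}$, so $k$ is adjacent only to its endpoints $p,p'$ — for which $L_{i-1}$ is an inner level whose two $H$-vertices $p,p'$ lie in one level component; the inductive hypothesis then collapses it to a $C_5$, and iterating contradicts the length of $H$. A symmetric argument, now using the vertex of $\{q,q'\}$ adjacent to $k$ supplied above, handles the arc $P$ through $L_{i+1}$ and above. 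The delicate step — and, I expect, where the real work is — is to confirm that $p$ and $p'$ (and, symmetrically, $q$ and $q'$) really lie in a single level component, so that the induction applies; this requires a careful analysis of their neighbourhoods in the adjacent levels through \pref{lem:backward_neighbours}.

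\emph{Conclusion.} Granting $V(H)\subseteq L_{i-1}\cup L_i\cup L_{i+1}$, the fact that $i$ is an inner level forces $i_0=i-1$ and $j_0=i+1$; by \pref{lem:structure_H}, $H$ then has two vertices in $L_{i-1}$, two in $L_i$, and a single vertex $z$ in the last level $L_{i+1}$, so $|V(H)|=5$ and $H=C_5$. This $z$ is the common $H$-neighbour of $x$ and $x'$; if $k\not\sim z$, then $k-x-z-x'-k$ is a chordless $4$-cycle — its only possible chords $kz$ and $xx'$ both being absent — an even hole, which is impossible. Hence $k\sim z$, and together with $k\sim x,x',p,p'$ this shows that the king vertex $k$ of $C$ is complete to $V(H)$. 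The lemma follows.
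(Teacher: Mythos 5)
Your setup is sound and coincides with the paper's opening moves: you correctly deduce that $C$ is a $2$-clique $C(A,B,K)$ with the two $H$-vertices $x\in A$, $x'\in B$, that any $k\in K$ is adjacent to $x,x',p,p'$ via Lemma~\ref{lem:backward_neighbours}(i), and your parity argument forcing $k$ to be adjacent to at least one of $q,q'$ is exactly the paper's next step. However, the heart of the lemma --- that $H$ cannot extend beyond $L_{i-1}\cup L_i\cup L_{i+1}$ --- is precisely what you leave unproven, and the induction you sketch does not close it. Your inductive hypothesis can only be invoked on the shorter hole $k-p-\cdots-p'-k$ if $p$ and $p'$ lie in a \emph{common} level component of $L_{i-1}$; you flag this yourself as ``the delicate step,'' but it is not merely delicate: for a hole that is not a dominated $C_5$, the two vertices in an inner level lie in \emph{distinct} level components (this is exactly the dichotomy that the present lemma, together with Lemma~\ref{lem:H_inner_level_king vertex}, is establishing), so the precondition will in general fail and no contradiction results. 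The closing phrase ``iterating contradicts the length of $H$'' is also not an argument. As written, the proposal proves the conclusion only conditionally on an unestablished claim.

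The missing idea is that the adjacency $k\sim q$ (say), which you already have in hand, finishes the proof immediately via two claws --- no induction and no confinement argument are needed. If $q\neq q'$, then $\{k,p,x',q\}$ induces a claw centered at $k$: indeed $p\sim x'$ and $x'\sim q$ would both be chords of $H$ (the $H$-neighbours of $x'$ are $p'$ and $q'$, and $p\neq p'$, $q\neq q'$), and $p\in L_{i-1}$ cannot be adjacent to $q\in L_{i+1}$. Hence $q=q'$ and $L_{i+1}$ is the last level of $H$; then $k\sim q$ is forced outright, since otherwise $k-x-q-x'-k$ is an even hole. Finally, if $p\not\sim p'$, then $\{k,p,p',q\}$ induces a claw, so $p\sim p'$, which by Lemma~\ref{lem:edge_in_levels_H} means $L_{i-1}$ is the first level of $H$; thus $H$ is a $C_5$ on $\{p,p',x,x',q\}$ dominated by $k$. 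This is the paper's route, and it is where your attempt genuinely falls short.
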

\begin{proof}
	Let $H$ be a hole in $G_0$ with a component \( C \in \tau (H) \) in an inner level \( L_i \) such that \( V(H) \cap L_i \subset C \).
	Let $h_0,h_0',h_1,h_1',h_2,h_2'$ be vertices of $H$ such that \( h_0, h_0' \in V(H) \cap L_{i-1} \), \(  h_1, h_1' \in V(H) \cap L_{i} \) and \(  h_2, h_2' \in V(H) \cap L_{i+1} \)  such that \( h_0-h_1-h_2 \) and \( h_0'-h_1'-h_2' \) are subpaths of $H$. 
Note that if $i+1$ is the last level of $H$, then $h_2=h_2'$.
	Since \( C \) is a $2$-clique, by Lemma \ref{lem:backward_neighbours}-(i), there is a king vertex \( k \in V(C) \) such that \( k \) is adjacent to all vertices \( h_0, h_0',h_1, h_1'  \).
	Also, let \( h_2 - P - h_2' \) be the induced subpath of \( H \) in \( L_{>i} \).
	
	Now if \( h_2\neq h_2'\), then to prevent the hole \( h_1 - h_2 - P - h_2' - h_1' - k - h_1 \) to be an even hole, \( k \) must be adjacent to only one of \( h_2 \) or \( h_2' \).
	Without loss of generality, suppose that \( k \) is adjacent to \( h_2 \).
	Now \( \{k, h_0, h_1', h_2 \} \) forms a claw, a contradiction.
	So, \(  h_2 = h_2' \) and $L_{i+1}$ is the last level of $H$.
	Also, to prevent \( h_1 - h_2 - h_1' - k - h_1 \) to be an even hole, \( k \) is adjacent to \( h_2 \).
	
	Now, if \( h_0 \) and \( h_0' \) are non-adjacent, then \( \{k, h_0, h_0', h_2 \} \) forms a claw,  which is a contradiction. 
Hence, $h_0$ is adjacent to $h_0'$ and thus, \(  L_{i-1}  \) is the first level of \( H \) and \( H \) is a dominated \( C_5 \). 
This completes the proof.
\end{proof}

In the following lemma, we prove that every non-dominated hole $H$ in $G_0$ passes through king vertices of the components of \( H \).

\begin{lem} \label{lem:H_inner_level_king vertex}
Let $H$ be a hole of $G_0$. 
If $H$ is not a dominated  $C_5$, then every vertex of $H$ in a component \( C \in \tau (H) \) is a king vertex of \( C \). 
Also, if $H$ is a dominated $C_5$, then the vertex in the last level of $H$ is a king vertex.
\end{lem}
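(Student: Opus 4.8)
The plan is to separate the two assertions and handle the non-dominated case first, since the dominated $C_5$ case will turn out to be quick once we have the machinery. Fix a hole $H$ and a component $C\in\tau(H)$ in some level $L_i$ with $i\geq i_0+1$. By \pref{lem:structure_H}, $V(H)\cap L_i$ consists of either one vertex (if $L_i$ is the last level of $H$) or exactly two vertices; call them $h,h'$ (with $h=h'$ possible only when $i=j_0$). If $C$ is itself a clique, every vertex of $C$ is a king vertex and there is nothing to prove, so by \pref{lem:cc_form} we may assume $C=C(A,B,K)$ is a $2$-clique; the set of king vertices of $C$ is exactly $K$. So the goal is to show $h,h'\in K$.

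First I would dispose of the subcase $V(H)\cap L_i\subset C$: this is precisely the hypothesis of \pref{lem:$2$-clique_C5}, which forces $H$ to be a dominated $C_5$ — excluded in the first assertion. Hence $V(H)\cap L_i=\{h,h'\}$ with $h$ and $h'$ in \emph{different} level components when $i$ is an inner level, or $V(H)\cap L_i=\{h\}$ when $i=j_0$. In the inner-level case $h$ and $h'$ lie in distinct components, so at most one of them can be in $C$; the real content is to rule out $h\in V(C)\setminus K=A\cup B$. Suppose for contradiction $h\in A$. Let $x^-\in N^-(h)\cap L_{i-1}$ be the neighbor of $h$ along $H$ (the existence and the fact that it lies one level down is \pref{claim:subpath}), and let $x^+$ be the other neighbor of $h$ along $H$, which by \pref{lem:edge_in_levels_H}(i) lies in $L_{i+1}$ (or we are at the last level). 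Now use \pref{lem:backward_neighbours}(i): the vertex $x^-\in U$ has $N^+(x^-)=A\cup K$ or $N^+(x^-)=B\cup K$, and since $x^-\sim h\in A$ it must be $N^+(x^-)=A\cup K$, so $x^-$ is complete to $K$. Pick any $b\in B$ and any $k\in K$. The idea is to produce a forbidden path or a claw: $x^-$ is adjacent to $h\in A$ and to $k\in K$ but not to $b$ (since $A$ incomplete to $B$ forces, via \pref{lem:clique}, that $x^-\not\sim b$), and one builds a short forbidden path through $h$, a king vertex $k$, and a witness in $L_{i-1}$ for $b$ via \pref{col:xy_edge}, exactly in the style of the proof of \pref{lem:backward_neighbours}(i). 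This contradicts \pref{lem:forbidden_path}, so $h\in K$; symmetrically $h'\in K$, settling the non-dominated case.

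For the dominated $C_5$ case, write $H=h_1-h_2-h_1'-h_2'-k-h_1$ in the notation of the proof of \pref{lem:$2$-clique_C5}, where $k$ is the king vertex of the $2$-clique $C$ in the inner level $L_i$ dominating $H$, $\{h_0=h_1',h_0'=h_2'\}=V(H)\cap L_{i-1}$ is the first level, $\{h_1,h_1'\}=V(H)\cap L_i$, and $\{h_2=h_2'\}=V(H)\cap L_{i+1}$ is the last level. The only vertex of $H$ in an inner-level component lies in $L_i$; but both $h_1,h_1'$ there equal $k$ or are adjacent-to-$k$ — more precisely, $\tau(H)$ for a dominated $C_5$ has size $2$ and the relevant assertion is only about the vertex in the \emph{last} level $L_{i+1}$, namely $h_2$. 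So I must show $h_2$ is a king vertex of its component $C'\in\tau(H)$ in $L_{i+1}$. Since $L_{i+1}=L_{j_0}$ is the last level of $H$, \pref{lem:structure_H} gives $V(H)\cap L_{i+1}=\{h_2\}$, so $V(H)\cap L_{i+1}\subset C'$, and now \pref{lem:$2$-clique_C5} applied to $C'$ forces either that $C'$ is a clique (whence $h_2$ is trivially a king vertex) or that $H$ is a $C_5$ dominated by a king vertex of $C'$ — but the unique vertex of $H$ two levels above $L_{i-1}$ in that picture is $h_2$ itself, so $h_2$ plays the role of the last-level vertex and is forced to be that king vertex. Either way $h_2$ is a king vertex of $C'$, as required.

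The main obstacle I anticipate is the contradiction step in the middle paragraph: correctly choosing the witness vertex in $L_{i-1}$ (using \pref{col:xy_edge} on an edge inside $C$) and checking that the resulting short path really is \emph{forbidden} — i.e. either induced or closing to a hole — rather than accidentally having a chord that \pref{lem:clique} does not rule out. This is exactly the delicate bookkeeping carried out in the proof of \pref{lem:backward_neighbours}(i), and the cleanest route is probably to reduce directly to that lemma: observe that the neighbor $x^-$ of $h$ in $L_{i-1}$ belongs to the set $U$ of \pref{lem:backward_neighbours} for the component $C$, so the structural conclusion $N^+(x^-)\in\{A\cup K,\,B\cup K\}$ is already available, and then a single application of \pref{lem:clique} (a claw centered at $x^-$ on $\{x^-,h,b,x^+\}$, or the symmetric claw) yields the contradiction without reconstructing a forbidden path from scratch.
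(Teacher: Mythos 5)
Your overall skeleton (reduce to a $2$-clique component $C=C(A,B,K)$, dispose of the case $V(H)\cap L_i\subset C$ via Lemma~\ref{lem:$2$-clique_C5}, then derive a contradiction from a non-king vertex $h\in A$) matches the paper's, but both places where the contradiction actually has to be produced contain genuine gaps. In the inner-level case your concrete suggestion fails: the set $\{x^-,h,b,x^+\}$ is not a claw for any choice of center, since $h\not\sim b$ ($A$ is incomplete to $B$), $x^-\not\sim b$ (because $N^+(x^-)=A\cup K$), and $x^-\not\sim x^+$ (they lie two levels apart). Nor is there an obvious ``short'' forbidden path: the natural candidate $u-k-h-x^-$, with $u$ a common neighbour of $b$ and $k$ in $L_{i-1}$, has the chord $kx^-$ precisely because $x^-$ is complete to $K$. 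The paper's argument here is genuinely non-local: it takes the entire subpath $h_2-P-h_2'$ of $H$ lying in $L_{>i}$ and splits on whether the king vertex $k$ is adjacent to $h_2$ or $h_2'$. If it is adjacent to neither, then $u-k-h_1-h_2-P-h_2'-h_1'-h_0'$ is a forbidden path (this uses that $H$ is not a dominated $C_5$, so $h_1'$ lies in a different component and $k\not\sim h_1'$); if it is adjacent to one of them, two successive claw arguments (first forcing $v\sim h_2$, then a claw centered at $h_2$) finish. Your write-up constructs neither.

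The dominated-$C_5$/last-level part is also wrong as written: Lemma~\ref{lem:$2$-clique_C5} applies only to components in an \emph{inner} level, so it cannot be invoked for the component $C'$ in the last level $L_{j_0}$ (if it could, every hole would be a dominated $C_5$, since the last level always contains a single vertex of $H$); moreover, even its conclusion produces a dominating king vertex \emph{off} the hole, which would not show that $h_2$ itself is a king vertex of $C'$. The correct argument (the paper's first case, which covers the last level for dominated and non-dominated holes alike) is local: if the last-level vertex $h_1$ lies in $A$, its two $H$-neighbours $h_0,h_2$ both lie in $L_{i-1}$ and satisfy $N^+(h_0)=N^+(h_2)=A\cup K$ by Lemma~\ref{lem:backward_neighbours}(i), so for any $k\in K$ and $v\in B$ the set $\{k,h_0,h_2,v\}$ induces a claw centered at $k$. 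You would need to supply this argument and the non-local one above to close the proof.
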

\begin{proof}
Let $H$ be an arbitrary hole in $G_0$. 
Suppose that $h_1\in V(H)\cap L_i$ is a non-king vertex of a component \( C \in \tau (H) \). 
Thus, \( C \) is a $2$-clique.
First, suppose that $L_i$ is the last level of $H$ (Here, $H$ could be either dominated or not).	
So, there are non-adjacent vertices \( h_0, h_2 \in V(H) \cap L_{i-1} \) which are both adjacent to $h_1$. 
	By Lemma \ref{lem:backward_neighbours}, there is a king vertex \( k \) of \( C \) such that \( k \) is adjacent to \( h_0,h_1,h_2 \) and a non-king vertex \( v \) of \( C \) such that \( v \) is adjacent to \( k \) but non-adjacent to \( h_0,h_1,h_2 \).
Now, \( \{k, h_0, h_2, v \} \) is a claw, a contradiction.

Now, suppose that $L_i$ is not the last level of $H$. 
Also, suppose that $H$ is not a dominated  $C_5$. 
Since $h_1$ is not a king vertex of \( C \), there is a king vertex $k$ of \( C \) and a non-king vertex $v$ of \( C \) such that $h_1$ is non-adjacent to $v$. 
Also, let $u$ be a common neighbor of $v$ and $k$ in $L_{i-1}$.
Let $h_0-h_1-h_2-P-h_2'-h_1'-h_0'$ be the subpath of $H$ in $L_{\geq{i-1}}$ such that $h_0,h_0'\in L_{i-1}$, $h_1,h_1'\in L_{i}$, and $h_2,h_2'\in L_{i+1}$ (if $L_{i+1}$ is the last level of $H$, then $h_2=h_2'$).

First, suppose that $k$ is non-adjacent to both  $h_2$ and $h_2'$. 
In this case, we claim that $Q=u-k-h_1-h_2-P-h_2'-h_1'-h_0'$ is a forbidden path. 
To see this, first note that since $H$ is not a dominated $C_5$, by Lemma~\ref{lem:$2$-clique_C5}, $h_1$ and $h_1'$ are in two distinct components of $H$ in $L_i$ and thus, $k$ is non-adjacent to $h_1'$. 
Therefore, by Lemma~\ref{lem:clique}, $k$ is non-adjacent to $h_0'$ and $u$ is non-adjacent to $h_1'$. 
This proves that $Q$ is a forbidden path, a contradiction with Lemma~\ref{lem:forbidden_path}.

Now, suppose that $k$ is adjacent to either $h_2$ or $h_2'$. 
Without loss of generality, suppose that $k$ is adjacent to $h_2$. 
Since $\{k,v,h_0,h_2\}$ does not induce a claw, we have $h_2$ is adjacent to $v$. 
Now, if $h_2=h_2'$, then $\{h_2,v,h_1,h_1'\}$ would induce a claw, and if $h_2\neq h_2'$, then $\{h_2,v,h_1,h_3\}$ would induce a claw, where $h_3 \in L_{i+2}$ is the first vertex in $P$ after $h_2$. 
This completes the proof.
\end{proof}

The following theorem, which is the main result of this section, illustrates the connection of two holes in \( G_0 \).

\begin{thm}\label{thm:tau}
	Let \( H_1 \) and \( H_2 \) be two holes in \( G_0 \).
	Then either \( \tau (H_1) \cap \tau (H_2) = \emptyset \), or \( \tau (H_1) = \tau (H_2) \).
	In the latter case, \( H_1 \) and \( H_2 \) share the same component in their first level.
\end{thm}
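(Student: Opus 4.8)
The plan is to use the rigid description of holes from Lemmas \ref{lem:edge_in_levels_H}--\ref{lem:H_inner_level_king vertex} to show that, once $H_1$ and $H_2$ meet in one component of their $\tau$-sets, they are forced to agree level by level. I will use repeatedly that a non-dominated hole $H$ has its two first-level vertices adjacent, exactly two vertices in each level strictly between its first and last level and these lie in two distinct level components, exactly one vertex in its last level, and that every vertex of $H$ in a component of $\tau(H)$ is a king vertex of that component. First I would dispose of dominated $C_5$'s: by Lemma \ref{lem:$2$-clique_C5}, if the two vertices of $H_t$ in some inner level lie in a single component then $H_t$ is a dominated $C_5$, which spans three consecutive levels and has $\tau(H_t)$ equal to the $2$-clique in the middle level together with the one-vertex component on top; two such $C_5$'s sharing a component have the same $\tau$ because each is pinned down by its dominating king vertex, and a dominated $C_5$ cannot share a $\tau$-component with a non-dominated hole, since the latter meets that component in a king vertex while the $C_5$ meets it in two non-king vertices and the two configurations together create a claw at the king vertex. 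From now on $H_1,H_2$ are non-dominated, $C\in\tau(H_1)\cap\tau(H_2)$ lies in a level $L_i$, and $h,g$ are the (unique, king) vertices of $H_1,H_2$ in $C$; since a king vertex is complete to the rest of its component, $h\sim g$ whenever $h\neq g$.

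Next I would establish a downward propagation step: if $L_i$ is not the last level of $H_1$ nor of $H_2$, then $H_1$ and $H_2$ share a component of $L_{i-1}$. Since $L_i$ is not $H_1$'s last level, $h$ has an $H_1$-neighbour $h^{+}\in L_{i+1}$; as $h^{+}$ is non-adjacent to every vertex of $L_{i-1}$, two non-adjacent vertices of $N^{-}(h)$ together with $h^{+}$ would be a claw at $h$, so $N^{-}(h)$ is a clique and hence lies in one component $C^{-}_h$ of $L_{i-1}$, and likewise $N^{-}(g)$ lies in one component $C^{-}_g$. If $h=g$ these coincide; if $h\neq g$, then Corollary \ref{col:xy_edge} applied to the edge $hg$ forces $N^{-}(h)\subseteq N^{-}(g)$ or the reverse, so again $C^{-}_h=C^{-}_g=:C^{-}$. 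The $H_1$- and $H_2$-neighbours of $h,g$ in $L_{i-1}$ lie in $N^{-}(h),N^{-}(g)\subseteq C^{-}$, so $C^{-}$ is a common component of both holes in $L_{i-1}$, lying in $\tau(H_1)\cap\tau(H_2)$ unless $L_{i-1}$ is the first level of one of them. Iterating, the chain of common components descends until it meets the first level of, say, $H_1$, at level $L_a$, where $H_1$ has a single component $C^{\ast}$ through which $H_2$ also passes; a counting argument tracing the common component up to $L_{a+1}$ and down again along both strands of $H_2$, together with Lemma \ref{lem:$2$-clique_C5}, then forces $L_a$ to be the first level of $H_2$ as well, so the two holes have the same first level and first-level component. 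This will already yield the last sentence of the theorem.

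The hard part will be the symmetric upward propagation: if $L_i$ is not the last level of either hole, then $H_1,H_2$ share a component of $L_{i+1}$. Here $N^{+}(h)$ is a clique by Lemma \ref{lem:clique}, so $h^{+}$ lies in one component $D_h$ of $L_{i+1}$ and the corresponding $H_2$-neighbour $g^{+}$ in one component $D_g$, and I must prove $D_h=D_g$. This configuration is invisible to the forbidden-path tool of Lemma \ref{lem:forbidden_path}, so the idea is to produce an even hole instead. If $D_h\neq D_g$ then $h\neq g$, $h\sim g$, and since $h\not\sim g^{+}$, $g\not\sim h^{+}$ and $h^{+}\not\sim g^{+}$, the path $h^{+}-h-g-g^{+}$ is an induced $P_4$. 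Following the proof of Lemma \ref{lem:forbidden_path}, take shortest paths $R_1,R_2$ from $h^{+},g^{+}$ down to $u_0$ that avoid $h$ and $g$, let $x_1x_2$ be the first edge joining them (its endpoints in a common level $L_m$, by Lemma \ref{lem:clique}), and observe that the closed walk $x_1 - R_1 - h^{+} - h - g - g^{+} - R_2 - x_2 - x_1$ has even length $2(i+1-m)+4\geq 6$; ruling out chords as in that proof makes it an even hole, a contradiction. The crux is exactly here: choosing $R_1,R_2$ so that the walk is an induced cycle, and handling the degenerate situations in which the first crossing edge already lies in or just below $L_i$, or in which $h$ is the only lower neighbour of $h^{+}$. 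Once upward propagation is in hand, iterating it carries the common component up one level at a time; a claw argument at the peak vertex — whose up-neighbourhood is empty and whose two down-neighbours are non-adjacent — forces the two holes to reach their last level at the same place and to share the peak. Combining the upward and downward propagations identifies every component of $\tau(H_1)$ with the corresponding component of $\tau(H_2)$, giving $\tau(H_1)=\tau(H_2)$.
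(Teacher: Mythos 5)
Your overall plan --- fix a shared component and force the two holes to agree level by level --- is in the spirit of the paper's argument, and your downward step (via the claw at $h$ showing $N^-(h)$ is a clique, plus Corollary~\ref{col:xy_edge} on the edge $hg$) is sound where it applies. But the proof has a genuine gap exactly where you flag ``the crux'': the upward propagation is not established, and the construction you sketch for it does not work as stated. The closed walk $x_1 - R_1 - h^{+} - h - g - g^{+} - R_2 - x_2 - x_1$ dips down into $L_i$ in the middle, so each of $R_1,R_2$ must itself pass through $L_i$ at a neighbour $r_1\in N^-(h^+)\setminus\{h\}$ (resp.\ $r_2\in N^-(g^+)\setminus\{g\}$). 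Since $h$ is a king vertex of $C$ (Lemma~\ref{lem:H_inner_level_king vertex}), any such $r_1$ lying in $C$ is adjacent to $h$, producing a chord; forcing $r_1$ outside $C$ runs into Lemma~\ref{lem:mate}(ii) and the case where $h$ is the only down-neighbour of $h^+$, which you explicitly leave open. So the even hole is never actually exhibited. The paper sidesteps this entirely by choosing $C$ to be the common component in the \emph{highest} possible level $L_i$: then the portions of $H_1$ and $H_2$ above $L_i$ lie in disjoint components, which is what makes $P^1\cup P^2$ induced (Claim~\ref{subclaim:case1.2}), and the forbidden path of Subcase~2.1 is built by going \emph{up and around along the holes themselves} rather than down toward $u_0$. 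This shows the highest common level is in fact the common last level, so only downward propagation is ever needed.

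There is a second, smaller gap in the assembly. Your downward step is stated only for levels that are the last level of neither hole, so it cannot be launched from the shared peak, where the relevant vertex has two non-adjacent down-neighbours lying (for a non-dominated hole) in two \emph{distinct} components of the level below. Relatedly, a single descending chain of shared components gives one common component per inner level, while $\tau(H_j)$ contains two per inner level; you never explain how the second strand gets identified. The paper handles this in Subcase~2.3 by taking the largest level containing a component of $\tau(H_1)$ disjoint from $H_2$ and deriving a claw using the king-vertex property, and then treats the first-level components separately with a $C_8$. I would recommend reorganizing your argument around the maximal common level so that the problematic upward step disappears.
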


\begin{proof}
	Let \( C \in \tau (H_1) \cap \tau (H_2) \) be the common component of \( H_1, H_2 \) in the highest possible level \( L_i \). 
So, the components of $H_1$ and $H_2$ in $L_{>i}$ are disjoint.
Let \( h_{10} - h_{11} \) and \( h_{20} - h_{21}\) be the subpaths of \( H_1 \) and \( H_2 \), respectively, such that \( h_{11}, h_{21} \in C \) and \( h_{10}, h_{20} \in L_{i-1} \) (they exist since  \( L_i \) is not the first level of \( H_1 \) and \( H_2 \)). 
Also, for $j\in\{1,2\}$, let $h_{j2}$ be the unique neighbor of $h_{j1}$ in $V(H_j)\setminus \{h_{j0}\}$.
One of the following two cases may occur.	
	
	\paragraph{Case 1.} The vertices $h_{11}$ and  $h_{21}$ are distinct and non-adjacent.

	Then, $h_{11}$ and $h_{21}$ are not king vertices of $C$. 
Therefore, by Lemma~\ref{lem:H_inner_level_king vertex}, $H_1$ and $H_2$ are both dominated $C_5$ and $L_{i}$ is not the last level of both $H_1,H_2$. 
Let $h_{j1}'$, $j\in\{1,2\}$, be the unique vertex in $V(H_j)\cap L_i\setminus\{h_{j1}\}$.
Since $h_{11}$ and $h_{21}$ are non-adjacent vertices in the unique component  \( C \) of $H_1$ and $H_2$ in $L_i$, the set \( \{ h_{21}, h_{11}' \} \) is incomplete to the set \( \{ h_{11}, h_{21}' \} \) and $G_0$ induces on both sets a \( K_2 \).
Also, since the components of $H_1$ and $H_2$ in $L_{i+1}$ are disjoint,  $h_{12}$ is non-adjacent to $h_{22}$. 
So, by Lemma~\ref{lem:clique}, $h_{12}$ is incomplete to  $\{h_{21},h_{21}'\}$ and $h_{22}$ is incomplete to $\{h_{11},h_{11}'\}$. 
This implies that $h_{11}-h_{12}-h_{11}'-h_{21}-h_{22}-h_{21}'-h_{11}$ would be a hole $C_6$, a contradiction.

	\paragraph{Case 2.} The vertex \( h_{11}\) is adjacent to \( h_{21} \), or $h_{11}=h_{21}$.
	
	\paragraph{Subcase 2.1.} $h_{12},h_{22}\in L_{i+1}$.
	
	First, note that if $h_{11}=h_{21}$, then by Lemma \ref{lem:clique}, \( h_{12} \) is adjacent to \( h_{22} \), which is a contradiction with the choice of \( i \).
So, assume that $h_{11}$ and $h_{21}$ are distinct and adjacent.

	For $j\in \{1,2\}$, let \( h_{j0}', h_{j1}', h_{j2}' \) be the unique vertices in \( V(H_j) \cap L_{i-1} \setminus \{ h_{j0} \} \), \( V(H_j) \cap L_{i} \setminus \{ h_{j1} \} \), and \( V(H_j) \cap L_{i+1} \setminus \{ h_{j2} \}\), respectively.
	Also, let \( P^j \) be the subpath of \( H_j \) from \( h_{j2} \) to \( h_{j2}' \) in \( L_{> i} \).
	Note that, it is possible that \( h_{j2} = h_{j2}' \) whenever $L_{i+1}$ is the last level of $H_j$ (see Figure~\ref{fig:2.1}).
		We show that the path 
		\[
		Q = h_{11}' - h_{12}' - P^1 - h_{12} - h_{11} - h_{21} - h_{22} - P^2 - h_{22}' - h_{21}' 
		\]
		is a forbidden path.
		To prove this, we need the following claim.
		\begin{subclaim} \label{subclaim:case1.2}
			The subgraph \(P^1\cup P^2\) is an induced subgraph of $G$.
		\end{subclaim}
	Note that $P^1$ and $P^2$ are induced paths since they are subpaths of $H_1$ and $H_2$, respectively.
		To prove the claim, suppose that there are adjacent vertices \( x \in V(P^1) \) and \( y \in V(P^2) \).
		Without loss of generality, suppose that $x\in L_j$, for some $j$, and $y\in L_j\cup L_{j+1}$. 
If $y\in L_{j}$, then it contradicts the fact that \( L_i \) is the last level that \( H_1 \) and \( H_2 \) meet in a common component.
		So, $y\in L_{j+1}$. 
If $L_j$ is not the last level of $H_1$, then $x$ has a neighbor $x'\in L_{j+1}\cap V(H_1)$ and by Lemma~\ref{lem:clique}, $x'$ and $y$ are adjacent which is, again, contradicts how $L_i$ is chosen.
So, $L_j$ is the last level of $H_1$. 
Let $x',x''$ be the two neighbors of $x$ in $H_1$. 
Therefore, $x',x''\in L_{j-1}$ and thus, $\{x,x',x'',y\}$ induces a claw.
		This proves the claim.
		\\
		
		Note that every \( h \in \{ h_{11}, h_{11}', h_{21}, h_{21}' \} \) has only one neighbor in \( \{ h_{12}, h_{12}', h_{22}, h_{22}' \} \), since otherwise, by Lemma~\ref{lem:clique}, $P^1\cup P^2$ is not an induced subgraph, contradicting Claim~\ref{subclaim:case1.2}.		
		Also, \( h_{11}'\) is non-adjacent to  \( h_{21} \), since otherwise, \( h_{21} - h_{11} - h_{12}- P^1 - h_{12}' - h_{11}' - h_{21} \) is an even hole.
		Similarly, \( h_{11}\) is non-adjacent to  \( h_{21}' \). 
Hence, $Q$ is a forbidden path, contradicting Lemma~\ref{lem:forbidden_path}.

\begin{figure}
    \centering  
    \begin{subfigure}{0.4\textwidth}
        \centering
        \begin{tikzpicture}
\draw[fill=black] (3,6) circle (2pt);
\draw[fill=black] (0,6) circle (2pt);
\draw[fill=black] (3,4) circle (2pt);
\draw[fill=black] (0,4) circle (2pt);
\draw[fill=black] (0,2) circle (2pt);
\draw[fill=black] (3,2) circle (2pt);
\draw[fill=black] (3,0) circle (2pt);
\draw[fill=black] (0,0) circle (2pt);
\node at (0,-.5) {\Large \( h_{21}' \)};
\node at (0,1.5) {\Large \( h_{21} \)};
\node at (0,4.5) {\Large \( h_{11} \)};
\node at (0,6.5) {\Large \( h_{11}' \)};

\node at (3,-.5) {\Large \( h_{22}' \)};
\node at (3,1.5) {\Large \( h_{22} \)};
\node at (3,4.5) {\Large \( h_{12} \)};
\node at (3,6.5) {\Large \( h_{12}' \)};

\node at (0,-2) {\LARGE \( L_i \)};
\node at (3,-2) {\LARGE \( L_{i+1} \)};

\node at (5,1) {\LARGE \( P_1 \)};
\node at (5,5) {\LARGE \( P_2 \)};

\draw[thick] (0,0) -- (3,0);
\draw[thick] (3,0) .. controls (5, 0) and  (5, 2) .. (3,2);
\draw[thick] (3,2) -- (0,2) -- (0,4) -- (3,4);
\draw[thick] (3,4) .. controls (5, 4) and  (5, 6) .. (3,6);
\draw[thick] (3,6) -- (0,6);

\draw[draw=none, fill=gray, fill opacity = 0.4, even odd rule, rounded corners=2mm] (-1,7) rectangle ++(2,-8);
\draw[draw=none, fill=gray, fill opacity = 0.4, even odd rule, rounded corners=2mm] (2,7) rectangle ++(2,-8);

\end{tikzpicture}  
        \caption{Subcase 2.1}\label{fig:2.1}
    \end{subfigure}\hfill
    \begin{subfigure}{0.6\textwidth}
        \centering
        \begin{tikzpicture}

\draw[fill=black] (0,4) circle (2pt);
\draw[fill=black] (0,2) circle (2pt);
\draw[fill=black] (3,2) circle (2pt);
\draw[fill=black] (3,0) circle (2pt);
\draw[fill=black] (0,0) circle (2pt);

\draw[fill=black] (-3,6) circle (2pt);
\draw[fill=black] (-3,4) circle (2pt);

\draw[fill=black] (-3,0) circle (2pt);


\node at (-3,-.5) {\Large \( h_{20}' \)};

\node at (-3,4.5) {\Large \( h_{10} \)};
\node at (-3,6.5) {\Large \( h_{12} \)};

\node at (0,-.5) {\Large \( h_{21}' \)};
\node at (0,1.5) {\Large \( h_{21} \)};
\node at (0,4.5) {\Large \( h_{11} \)};

\node at (3,-.5) {\Large \( h_{22}' \)};
\node at (3,1.5) {\Large \( h_{22} \)};

\node at (-3,-2) {\LARGE \( L_{i-1} \)};
\node at (0,-2) {\LARGE \( L_i \)};
\node at (3,-2) {\LARGE \( L_{i+1} \)};

\node at (5,1) {\LARGE \( P_1 \)};

\draw[thick] (-3,0) --(0,0) -- (3,0);
\draw[thick] (3,0) .. controls (5, 0) and  (5, 2) .. (3,2);
\draw[thick] (3,2) -- (0,2) -- (0,4) -- (-3,4);
\draw (-3,6) -- (0,4);

\draw[dashed] (-3,4) -- (0,2);

\draw[draw=none, fill=gray, fill opacity = 0.4, even odd rule, rounded corners=2mm] (-4,7) rectangle ++(2,-8);
\draw[draw=none, fill=gray, fill opacity = 0.4, even odd rule, rounded corners=2mm] (-1,7) rectangle ++(2,-8);
\draw[draw=none, fill=gray, fill opacity = 0.4, even odd rule, rounded corners=2mm] (2,7) rectangle ++(2,-8);

\end{tikzpicture}
 
        \caption{Subcase 2.2}        	\label{fig:2.2}
    \end{subfigure}
\caption{An schema of Subcases 2.1 and 2.2 in the proof of Lemma~\ref{thm:tau}.}
\end{figure}
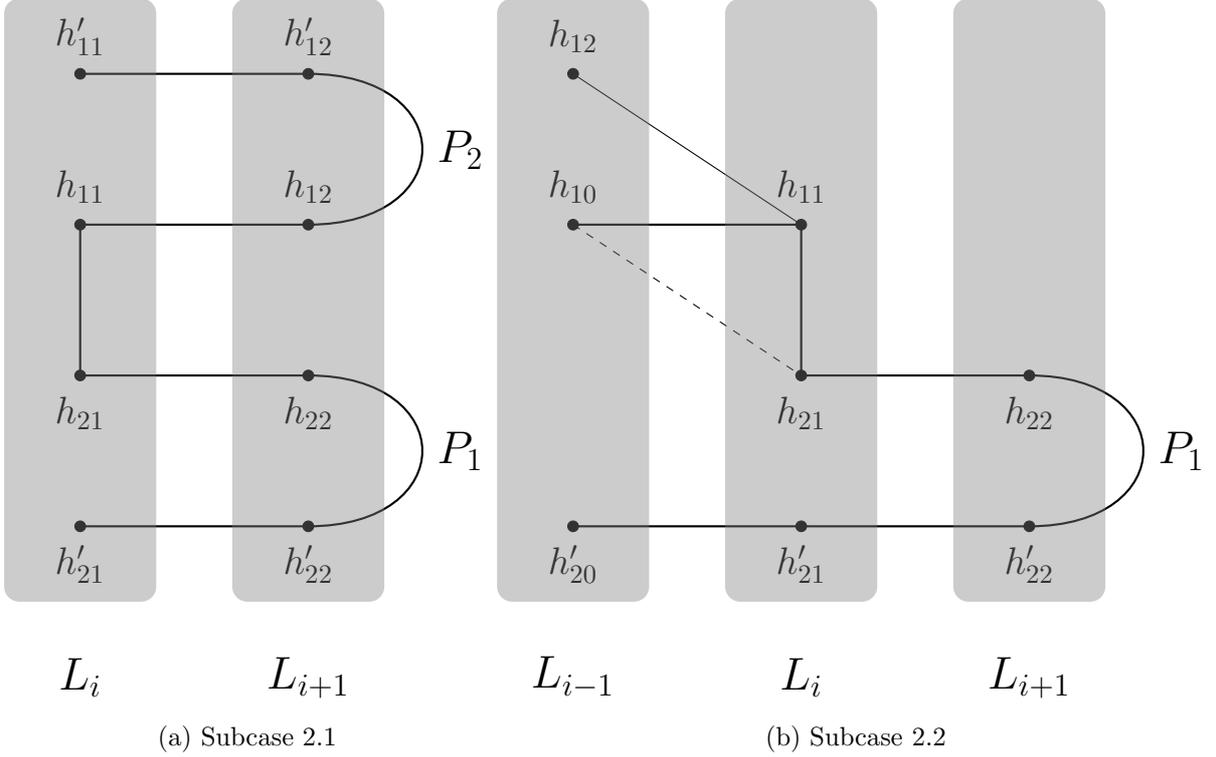

	\paragraph{Subcase 2.2.}
	$h_{12}\in L_{i-1},h_{22}\in L_{i+1}$.
	
First, note that if $h_{11}=h_{21}$, then we have a claw on the set of vertices $\{h_{11},h_{22},h_{10},h_{12}\}$. 
So, assume that $h_{11}$ and $h_{21}$ are distinct and adjacent.
Let \( h_{20}', h_{21}', h_{22}' \) and the path $P^2$ be as defined in Subcase~2.1 (see Figure~\ref{fig:2.2}).
In this case, $h_{21}$ is non-adjacent to either $h_{10}$ or $h_{12}$, since otherwise $\{h_{21},h_{10},h_{12},h_{22} \}$ would be a claw. 
Without loss of generality, suppose that $h_{21}$ is non-adjacent to $h_{10}$. 
We claim that $Q= h_{10} - h_{11} - h_{21} - h_{22} - P^2 - h_{22}' - h_{21}' - h_{20}' $ is a forbidden path.
To see this, first note that $h_{11}$ has no neighbor $v$ in $L_{>i}$, since otherwise, $\{h_{11},v,h_{10},h_{12}\}$ would be a claw.
Also, $h_{11}$ is non-adjacent to $h'_{21}$, since otherwise, $h_{11}- h_{21} - h_{22} - P^2 - h_{22}' - h_{21}' - h_{11}$ would be an even hole. 
So, by Lemma~\ref{lem:clique}, $h_{10}$ is non-adjacent to $h_{21}'$ and $h_{20}'$ is non-adjacent to $h_{11}$. 
Hence, $Q$ is a forbidden path, contradicting Lemma~\ref{lem:forbidden_path}.

\paragraph{Subcase 2.3.} 	$h_{12},h_{22}\in L_{i-1}$.

In this subcase, $i$ is the last level of $H_1$ and $H_2$. Suppose that $\tau(H_1)\neq \tau(H_2)$ and let $L_j$ be the level with the largest index such that there exists a component $C\in \tau(H_1)$ in the level $L_j$ where $V(C)\cap V(H_2)=\emptyset$. 
If $j=i-1$, then either $h_{10}$ or $h_{12}$, say $h_{10}$, is in $C$.
Thus, $h_{10}$ is non-adjacent to $h_{20}$ and $h_{22}$ (Figure~\ref{fig:2.3}). 
\begin{figure}
	\centering
	\begin{tikzpicture}
\draw[fill=black] (0,6) circle (2pt);
\draw[fill=black] (3,4) circle (2pt);
\draw[fill=black] (0,4) circle (2pt);
\draw[fill=black] (0,2) circle (2pt);
\draw[fill=black] (3,2) circle (2pt);
\draw[fill=black] (0,0) circle (2pt);

\draw[fill=black] (-4,2) circle (2pt);
\draw[fill=black] (-7,2) circle (2pt);

\draw[fill=black] (-4,4) circle (2pt);
\draw[fill=black] (-7,4) circle (2pt);
\draw[fill=black] (-10,4) circle (2pt);

\node at (0,-.5) {\Large \( h_{22} \)};
\node at (0,1.5) {\Large \( h_{20} \)};
\node at (0,4.5) {\Large \( h_{12} \)};
\node at (0,6.5) {\Large \( h_{10} \)};

\node at (-4,1.5) {\Large \( v_{j+2} \)};
\node at (-7,1.5) {\Large \( v_{j+1} \)};

\node at (-4,4.5) {\Large \( u_{j+2} \)};
\node at (-7,4.5) {\Large \( u_{j+1} \)};
\node at (-10,4.5) {\Large \( u_{j} \)};

\node at (3,1.5) {\Large \( h_{21} \)};
\node at (3,4.5) {\Large \( h_{11} \)};

\node at (3,-2) {\LARGE \( L_i \)};
\node at (0,-2) {\LARGE \( L_{i-1} \)};
\node at (-2,-2) {\LARGE \( \cdots \)};
\node at (-4,-2) {\LARGE \( L_{j+2} \)};
\node at (-7,-2) {\LARGE \( L_{j+1} \)};
\node at (-10,-2) {\LARGE \( L_{j} \)};


\draw[thick] (0,0) -- (3,2);
\draw[thick] (3,2) -- (0,2);
\draw[thick] (0,4) -- (3,4);
\draw[thick] (3,2) -- (3,4);

\draw[thick] (3,4) -- (0,6);
\draw[thick] (-10,4) -- (-7,4) -- (-4,4);
\draw[thick] (-7,2) -- (-4,2);

\draw[draw=none, fill=gray, fill opacity = 0.4, even odd rule, rounded corners=2mm] (-11,7) rectangle ++(2,-8);
\draw[draw=none, fill=gray, fill opacity = 0.4, even odd rule, rounded corners=2mm] (-8,7) rectangle ++(2,-8);
\draw[draw=none, fill=gray, fill opacity = 0.4, even odd rule, rounded corners=2mm] (-5,7) rectangle ++(2,-8);
\draw[draw=none, fill=gray, fill opacity = 0.4, even odd rule, rounded corners=2mm] (-1,7) rectangle ++(2,-8);
\draw[draw=none, fill=gray, fill opacity = 0.4, even odd rule, rounded corners=2mm] (2,7) rectangle ++(2,-8);

\end{tikzpicture}
	\caption{An schema of Subcase 2.3 in the proof of Lemma~\ref{thm:tau}.}
	\label{fig:2.3}
\end{figure}
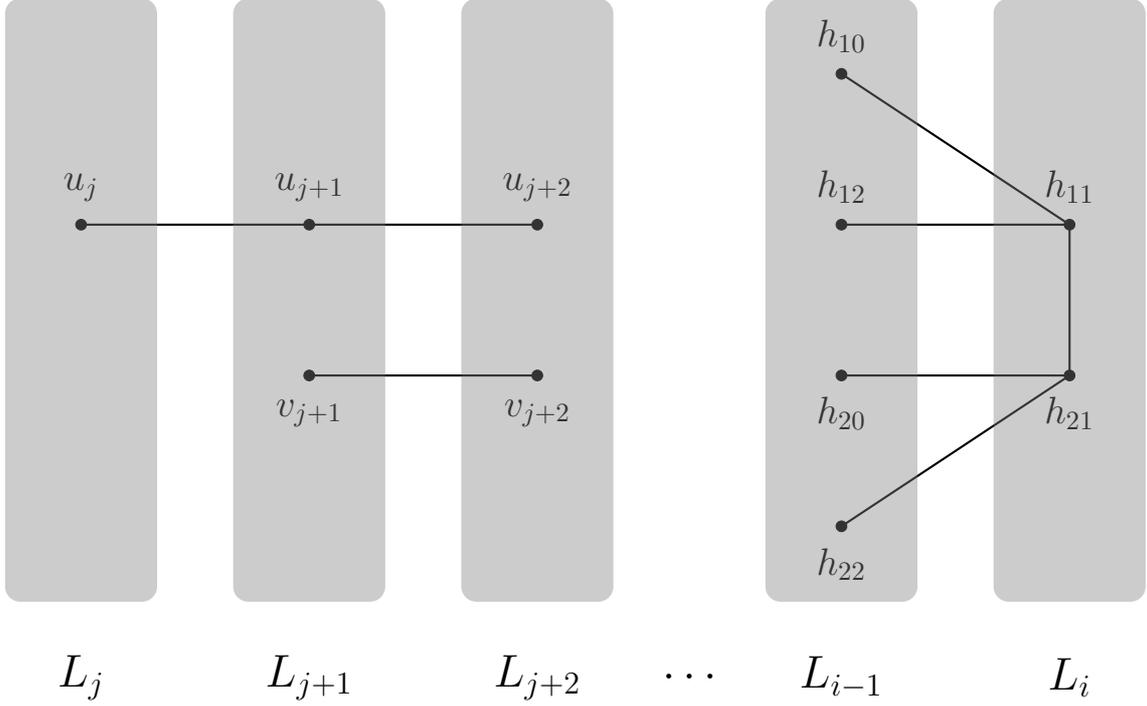
So, in case $h_{11}=h_{21}$, we have a claw on the set $\{h_{21},h_{20},h_{22},h_{10}\}$, and, in case $h_{11}$ is adjacent to $h_{21}$, by Corollary~\ref{col:xy_edge}, either $h_{11}$ is adjacent to both $h_{20}$ and $h_{22}$, or $h_{21}$ is adjacent to $h_{10}$ which yields that there is a claw with leaves $h_{10},h_{20},h_{22}$ and the center $h_{11}$ or $h_{21}$.

So, suppose that $j\leq i-2$ and thus $H_1\neq C_5$. 
Take a vertex $u_j\in V(H_1)\cap V(C)$ and let $u_j-u_{j+1}-u_{j+2}$ be the subpath of $H_1$ in $L_{j}\cup L_{j+1}\cup L_{j+2}$. By the choice of $j$, there is a vertex $v_{j+1}\in V(H_2)\cap L_{j+1}$ which is in the same component with $u_{j+1}$. Let $v_j-v_{j+1}-v_{j+2}$ be the subpath of $H_2$ where $v_{j+2}\in L_{j+2}$. Then we have  $v_j\in L_j$, since otherwise $L_{j+1}$ is the first component of $H_2$ and $H_1$ has two components in $L_{j+1}$, this is in contradiction with the choice of $j$. Thus, $v_j\in L_j$ and also $v_j\not\in C$ (by the definition of $C$).
Now, if $u_{j+1}=v_{j+1}$, then we have a claw on $\{u_{j+1},u_{j},v_{j}, u_{j+2}\}$. 
So, $u_{j+1}$ and $v_{j+1}$ are distinct. 
Also, since $H_1$ is not $C_5$, by Lemma~\ref{lem:H_inner_level_king vertex}, $u_{j+1}$ is adjacent to $v_{j+1}$. 
Now, by Corollary~\ref{col:xy_edge}, either $u_{j+1}$ is adjacent to $v_{j}$, or $v_{j+1}$ is adjacent to $u_{j}$ and we would have a claw on $\{u_{j+1},u_{j},v_{j},u_{j+2}\}$ or $\{v_{j+1},u_{j},u_{j},v_{j+2}\}$.

\vspace{10pt}

This proves that $\tau(H_1)=\tau(H_2)$. 
Now, we prove that, in this case, the first components of $H_1$ and $H_2$ are the same. 
Let $L_{i_0}$ be the first level of $H_1$ and $H_2$. 
Also, suppose that \( h_{11}, h_{11}' \in V(H_1) \cap L_{i_0} \) and \( h_{21}, h_{21}' \in V(H_2) \cap L_{i_0} \) are the vertices of \( H_1 \) and \( H_2 \) in their first level, respectively, which, by contrary, are located in different level components.
For $j\in \{1,2\}$, let \( h_{j1}-h_{j2}-h_{j3}\) and \(h_{j1}'-h_{j2}'-h_{j3}'\) be the subpaths of $H_j$ in $L_{i_0}\cup L_{i_0+1}\cup L_{i_0+2} $. 
(Note that we might have $h_{j3}=h_{j3}'$ if $H_j$ is a cycle $C_5$).
Since $\tau(H_1)=\tau(H_2)$, without loss of generality, suppose that \( h_{12}, h_{22} \) and \( h_{12}', h_{22}' \) are located in the  level components \( C_1 \) and \( C_2 \) in $L_{i_0+1}$, respectively.
Note that \( h_{12}\) is non-adjacent to \( h_{21} \), since otherwise, \( \{ h_{12}, h_{11}, h_{21}, h_{13} \} \) woud be a claw.
Similarly, \( h_{12}'\) is non-adjacent to \( h_{21}' \). 
(This implies that $h_{12}\neq h_{22}$ and $h_{12}'\neq h_{22}'$.)

If \(H_1\) and \(H_2\) are not dominated $C_5$, then by Lemma~\ref{lem:H_inner_level_king vertex}, \( h_{12} \) is adjacent to \( h_{22} \) and \( h_{12}' \) is adjacent to \( h_{22}' \). 
Also, if  \(H_1\) and \(H_2\) are dominated $C_5$, then \( C_1 = C_2 \) and 
so again, \( h_{12} \) is adjacent to \( h_{22} \) and \( h_{12}' \) is adjacent to \( h_{22}' \) and the set \( \{ h_{12}, h_{22} \} \) is incomplete to  the set \( \{ h_{12}', h_{22}' \} \).
Therefore, in both cases, \( h_{11} - h_{12} - h_{22} - h_{21} - h_{21}' - h_{22}' - h_{12}' - h_{11}' \) is an induced cycle \( C_8 \), a contradiction.
\end{proof}

\section{Winning Strategy with Three Cops}\label{sec:algorithm}

In this section, we prove that the cop number of every (claw, even-hole)-free graph is at most $3$.
In the next section, we refine the algorithm to find a winning strategy that needs at most two cops.
First, we have to employ the results of the previous sections to prove a couple of lemmas.
We start by showing how paths connect the level components.

\begin{lem}\label{lem:mate}
Let $C$ be a level component in $L_i$.

\begin{itemize}
\item[\rm (i)] We have two possibilities:
\begin{itemize}
	\item[\rm (a)] There is a unique level component $C'\neq C$ in $L_i$ for which there is a path from $C$ to $C'$ whose vertices are in $L_{\geq i}$. 
In this case, we say that $C$ is non-singular.
	\item[\rm (b)] There is no path from $C$ to any other level component of $L_i$ whose vertices are in $L_{\geq i}$. In this case, we say that $C$ is singular.
\end{itemize}
When $C$ is non-singular, we say that $C'$ is the \textit{mate} of $C$ and we denote $C'$ by $\eta(C)$. 
Also, when $C$ is singular, we define $\eta(C)=C$. 

\item[\rm (ii)] If $C$ is connected to a  level component $\widehat{C}$ in $L_{i+1}$, then the set of neighbors of $\widehat{C}$ in $L_i$ is a subset of $V(C)\cup V(\eta(C))$ and in case $C$ and $\widehat{C}$ are both non-singular, $\widehat{C}$  has no neighbor in $\eta(C)$.
\end{itemize}
\end{lem}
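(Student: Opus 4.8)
The plan is to extract the lemma from the hole structure of the previous section, in particular \pref{thm:tau}.

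Say that a level component $C$ of $L_i$ \emph{directly reaches} a distinct level component $C'$ if there is a path from $V(C)$ to $V(C')$ whose internal vertices all lie in $L_{>i}$; splitting an arbitrary $L_{\ge i}$‑path at its $L_i$‑vertices shows that $C$ reaches $C'$ in the sense of the lemma if and only if $C$ and $C'$ lie in the same connected component of the (symmetric) ``directly reaches'' graph on the level components of $L_i$. So it suffices to prove that this graph is a matching. First I would observe that a shortest path witnessing a direct reach between $C\ne C'$, say $x=a_0-a_1-\cdots-a_{2k}=x'$ with $x\in V(C)$, $x'\in V(C')$, is induced, has $a_1,a_{2k-1}\in L_{i+1}$, and has \emph{even} length: otherwise it would be exactly a forbidden path, contradicting \pref{lem:forbidden_path}. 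Then, imitating the construction in the proof of \pref{lem:forbidden_path}, I would extend it below $L_i$: take shortest paths $D^x,D^{x'}$ from $x$ and $x'$ down to $u_0$, let $x''x'''$ be the first edge between them (which lies within a single level, since $N^-(x)\cap N^-(x')=\emptyset$ by \pref{lem:clique}), and close up to the cycle
\[
H\;=\;x''-D^x-x-a_1-\cdots-a_{2k-1}-x'-D^{x'}-x'''-x''.
\]
Exactly the chord analysis of \pref{lem:forbidden_path} shows that $H$ is a hole of $G_0$, and by construction $V(H)\cap L_i=\{x,x'\}$ with $L_i$ an inner level of $H$; thus, by \pref{lem:structure_H}, $C$ and $C'$ are precisely the two level components that $H$ meets in $L_i$, and in particular $C,C'\in\tau(H)$.

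Now suppose the ``directly reaches'' graph is not a matching: some component $D$ directly reaches two distinct components $D'$ and $D''$. Applying the previous paragraph I get holes $H'$ and $H''$ of $G_0$ with $\{D,D'\}$ (resp.\ $\{D,D''\}$) the level components met in $L_i$. Since $D\in\tau(H')\cap\tau(H'')\ne\emptyset$, \pref{thm:tau} gives $\tau(H')=\tau(H'')$, so $D''\in\tau(H')$; but $D''$ is a level component of $L_i$, hence it must be one of the two level components that $H'$ meets in $L_i$, forcing $D''\in\{D,D'\}$ and so $D''=D'$, a contradiction. Hence the graph is a matching, which is exactly the dichotomy (a)/(b), and $\eta$ is well defined.

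For (ii), the first statement is immediate from (i): if $z\in L_i$ has a neighbour in $\widehat C$ and lies in a component $C''\ne C$, then, picking a vertex of $C$ with a neighbour in $\widehat C$ and routing through the connected set $\widehat C$ to a neighbour of $z$, one gets a path from $C$ to $C''$ with all vertices in $L_i\cup L_{i+1}\subseteq L_{\ge i}$, whence $C''=\eta(C)$; so every vertex of $L_i$ adjacent to $\widehat C$ lies in $V(C)\cup V(\eta(C))$. For the last statement, suppose $C$ and $\widehat C$ are non‑singular but, for contradiction, $\widehat C$ has a neighbour in $C':=\eta(C)$ (it has a neighbour in $C$ by hypothesis). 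Routing from $C$ to $C'$ through $\widehat C$ and taking a shortest such path — induced, of even length, with interior in $\widehat C\subseteq L_{i+1}$ — then closing it downward as above yields a hole $H_1$ whose top, hence last, level is $L_{i+1}$; by \pref{lem:structure_H} that level meets $H_1$ in a single vertex, so the path has length $2$ and there is a single $g\in\widehat C$ with a neighbour in $C$ and a neighbour in $C'$. (When $i=1$ this cycle is a $C_4$, an outright contradiction, so we may take $i\ge 2$.) Thus $\widehat C\in\tau(H_1)$ and, $L_{i+1}$ being the last level of $H_1$, every member of $\tau(H_1)$ is a level component of some $L_j$ with $j\le i+1$. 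On the other hand $\widehat C$, being non‑singular, directly reaches $\eta(\widehat C)$, so arguing as in the first part of the proof there is a hole $H_2$ meeting $L_{i+1}$ in $\widehat C$ with $L_{i+1}$ an \emph{inner} level of $H_2$; hence $H_2$ also meets $L_{i+2}$, and $\tau(H_2)$ contains a level component of $L_{i+2}$. Since $\widehat C\in\tau(H_1)\cap\tau(H_2)$, \pref{thm:tau} forces $\tau(H_1)=\tau(H_2)$, contradicting that $\tau(H_1)$ contains no level component of $L_{i+2}$. I expect the only genuinely delicate point to be the verification that the closed‑up cycles above are chordless holes of $G_0$ — but this just re‑runs the chord analysis already performed in the proof of \pref{lem:forbidden_path}.
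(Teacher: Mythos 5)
Your proof is correct and follows essentially the same route as the paper: both arguments extend the relevant $L_{\geq i}$-paths downward into holes via the construction from the proof of \pref{lem:forbidden_path} and then derive a contradiction from \pref{thm:tau} applied to two holes sharing a component of $\tau$. Your write-up is somewhat more explicit in places (the reduction to the ``directly reaches'' graph being a matching, and forcing the length-two path through $\widehat{C}$ in part (ii) via \pref{lem:structure_H}), but these are elaborations of the paper's argument rather than a genuinely different approach.
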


For instance, for every hole $H$ in $G_0$, the component of $H$ in its last level is singular. 
Also, if $H$ is not a dominated $C_5$, all of its components in its inner levels are non-singular and are singular otherwise.

\begin{proof}
In order to prove (i), for the contrary, suppose that there are two distinct level components $C_1,C_2$ in $L_i$ such that two induced paths $P^1$ and $P^2$ exist from $C$ to $C_1$ and $C_2$, respectively, whose all vertices are in $L_{\geq i}$. 
Using the method given in the proof of Lemma~\ref{lem:forbidden_path}, one can extend $P^1$ and $P^2$ into the holes $H_1$ and $H_2$ such that $C\in \tau(H_1)\cap \tau(H_2)$, but $C_1\in \tau(H_1)\setminus \tau(H_2)$, a contradiction with Theorem~\ref{thm:tau}. 
This proves (i).

In order to prove (ii), suppose that $\widehat{C}$ is connected to a level component $C'$ in $L_{i}$. 
Then, there is a path from $C$ to $C'$ whose vertices are in $L_{\geq i}$. 
Therefore, either $C'=C$, or $C'=\eta(C)$. 
This implies that the neighbors of \( \widehat{C} \) in $L_i$ are in $ V(C)\cup V(\eta(C))$. 
Now, suppose that both $C$ and $\widehat{C}$ are non-singular and $\widehat{C}$ is connected to both $C$ and $\eta(C)$. 
So, there is an induced path $\widehat{P}$ from $\widehat{C}$ to $\eta(\widehat{C})$ with vertices in $L_{\geq i+1}$ and there is an induced path $P$ which starts in $C$, passes $\widehat{C}$ and ends in $\eta(C)$. 
Again, $P$ and $\widehat{P}$ can be extended to the holes $H$ and $\widehat{H}$ such that $\widehat{C}\in  \tau(H)\cap \tau(\widehat{H})$, but $\eta(\widehat{C})\in \tau(\widehat{H})\setminus \tau(H)$, a contradiction with Theorem~\ref{thm:tau}. 
This proves (ii).

\end{proof}

 \subsection{The Algorithm} 

By a \textit{simple path}, we mean a path that has at most one vertex in each level $L_i$.
Let \( G \) be a connected (claw, even-hole)-free graph.
We describe our algorithm for the game of cops and robber in the graph $G$. 
In the beginning, we put three cops $c_0,c_1,c_2$ on an arbitrary vertex $u_0$. 
If the robber is located at a vertex $v$, then let $P^0$ be a shortest path from $u_0$ to $v$ and let $u_1$ be the neighbor of $u_0$ in $P^0$. 
Also, define $B_0$ and $G_0$ as in Subsection~\ref{sub:notation}. 
We always keep the cop $c_0$ on $u_0$, so the robber will be captured once it enters the box $B_0$. 
So, we may assume that the game is played in the graph $G_0$. 
Now, we move two cops $c_1$ and $c_2$ to $u_1$ and during the algorithm, we guarantee that for every simple path $P$ from the robber's location to $u_1$ in \( G_0 \), $P$ passes through a level component containing $c_1$ or $c_2$ (see Lemma~\ref{lem:correctness}).
For $i\in\{1,2\}$, let $C_i$ be the level component containing $c_i$, where $C_1$ and $C_2$ are both in the level $L_j$, for some $j\geq 1$. 
If there is a simple path from $u_1$ to the robber's location passing through a vertex $v\in C_i$, for some $i\in\{1,2\}$, we take one of these paths arbitrarily, say $P$, and define $C_i^+$ as the level component in $L_{j+1}$ containing the vertex $v^+$ which is the next vertex after $v$ in $P$.
Now, we define the cops' next steps as follows.

\begin{itemize}
	\item \textbf{Progressing:} If $C_{i+1}$ is connected to $\eta(C_i^+)$, then we move $c_i$ from $C_i$ to a king vertex of $C_i^+$ and $c_{i+1}$ from $C_{i+1}$ to a king vertex of $\eta(C_i^+)$ (subscripts are read modulo $2$). 
This can be safely done in at most two rounds (see Lemma~\ref{lem:safe_moves}). 
Note that if both $C_1^+$ and $C_2^+$ are defined, then by the definition, $C_2^+=\eta(C_1^+)$.
	\item \textbf{Synchronizing:} If $C_{i+1}$ is incomplete to $\eta(C_i^+)$, then we keep $c_i$ in its position and during a number of rounds, we move $c_{i+1}$ from $C_{i+1}$ to a king vertex of $C_{i}$ using a path with vertices in $L_{\geq j}$ (subscripts are read modulo $2$).
\end{itemize}

Note that after a progressing step, the level components containing $c_1,c_2$ are mate and after a synchronizing step, $c_1,c_2$ are in the same level component. 
Finally, the algorithm for the cops to chase the robber can be summarized as follows.

\begin{algorithm}
\begin{enumerate}
	\item \textbf{Initialization.} Locate the cops $c_0,c_1,c_2$ on an arbitrary vertex $u_0$ and let $P^0=u_0-u_1-\cdots$ be a path from $u_0$ to the robber's position. 
Then, move $c_1$ and $c_2$ to $u_1$.

	\item In \textit{each} round, if the robber is adjacent to some cop, then he is captured and the game is over. 
Otherwise,
	\item Suppose that the cops $c_1,c_2$ are located on the vertices $k_1,k_2$ in the level components $C_1,C_2$, respectively. 
Let $P$ be a simple path from the robber's location to $u_1$ which passes through $C_i$, for some $i\in\{1,2\}$. 
Then,
	\begin{itemize}
		\item  If $C_{i+1}$ is connected to $\eta(C_i^+)$, then do the progressing step as follows:
		Let $k_1',k_2'$ be king vertices of $C_i^+$ and $\eta(C_i^+)$, respectively. 
Then, for each $j\in \{1,2\}$, if $k_j$ is adjacent to $k'_j$, then move $c_j$ from $k_j$ to $k_j'$. 
Otherwise, find a vertex $v_j$ in $C_j$ which is adjacent to $k_j'$ and move $c_j$ from $k_j$ to $v_j$ and then, in the next round, from $v_j$ to $k_j'$.

		\item  If $C_{i+1}$ is incomplete to $\eta(C_i^+)$, then do the synchronizing step.
	\end{itemize}
	\end{enumerate}
	\caption{The algorithm}\label{alg:main}
\end{algorithm}

\subsection{Correctness}\label{sub:correct}
Here, we are going to prove that by applying Algorithm~\ref{alg:main}, the cops capture the robber in a finite number of rounds. 
First, we need a couple of lemmas.
%
\begin{lem} \label{lem:correctness}
During the execution of Algorithm~\ref{alg:main}, every simple path from the robber's location to $u_1$ passes through a level component containing $c_1$ or $c_2$.
\end{lem}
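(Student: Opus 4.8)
The statement is an invariant that must hold throughout the run of Algorithm~\ref{alg:main}, so the natural approach is induction on the number of rounds. At the moment of initialization both $c_1$ and $c_2$ sit on $u_1$, which is the only vertex of $L_1$'s predecessor-adjacent structure that every simple path from the robber's position to $u_1$ must of course terminate at; more precisely, in the base case any simple path to $u_1$ has its final vertex $u_1$ in the level component of $u_1$ in $L_1$, which contains both cops, so the invariant holds trivially. The work is in the inductive step: assuming that before a round every simple path from the robber's location to $u_1$ meets $C_1$ or $C_2$, I must show the same after the cops execute one step of the algorithm (progressing or synchronizing) and the robber then moves once.

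\textbf{Key steps.} First I would fix notation: say the invariant holds with the cops in level components $C_1, C_2 \subseteq L_j$, and let $P$ be the simple path selected by the algorithm, passing through $C_i$ at a vertex $v$, with $v^+ \in C_i^+ \subseteq L_{j+1}$ the next vertex of $P$. I would then split into the two cases of the algorithm. \emph{Synchronizing:} here $c_i$ stays put and $c_{i+1}$ walks to a king vertex of $C_i$ through $L_{\geq j}$; the level components containing the cops only shrink toward $C_i$, so any simple path that previously hit $C_1$ or $C_2$ still hits $C_i$ in its last $L_j$-vertex — but I must be careful that the robber's single move can push his position so that a \emph{new} simple path to $u_1$ appears avoiding $C_i$; this cannot happen because such a path would have to cross from $L_{\geq j+1}$ back through $L_j$ avoiding $C_i$, and Lemma~\ref{lem:mate}(ii) pins the backward neighbors of any level component in $L_{j+1}$ to $V(C_i)\cup V(\eta(C_i))$, while the synchronizing case is exactly the case where $C_{i+1}$ is incomplete to $\eta(C_i^+)$, which will let me rule out the escape route through $\eta(C_i)$. \emph{Progressing:} the cops advance to king vertices of $C_i^+$ and $\eta(C_i^+)$ in $L_{j+1}$. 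Now I would argue that any simple path $Q$ from the (post-move) robber location to $u_1$ must, on its way down through the levels, pass through $L_{j+1}$; let $\widehat{C}$ be the first $L_{j+1}$-component $Q$ enters after leaving $L_{\geq j+2}$ (or the robber is already in $L_{\leq j+1}$, handled directly using the old invariant plus the fact that the robber moved only one step). By Lemma~\ref{lem:mate}(ii) applied to $\widehat{C}$ and the component it connects to in $L_j$, together with Lemma~\ref{lem:mate}(i) characterizing mates, $\widehat{C}$ is forced to be $C_i^+$ or its mate $\eta(C_i^+)$, both of which now contain a cop — here I use that king vertices are genuinely complete to their component, so "passes through a level component containing a cop'' is the right notion. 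I would also need to observe that while the cops spend up to two rounds repositioning (the $k_j \to v_j \to k_j'$ detour), the robber moves at most twice, and a short argument (the detour vertex $v_j$ still lies in $C_j$, and $C_j$ still dominates all relevant simple paths by the old invariant) keeps the invariant alive during those intermediate rounds.

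\textbf{Main obstacle.} The delicate point is that the invariant talks about \emph{all} simple paths, and the robber's move can create simple paths that did not exist before; so the heart of the proof is showing that no such new path can sneak around both cops. This reduces to a structural fact — that once the robber is separated from $u_1$ by the two level components in level $L_j$ (in the mate sense), any route back to $u_1$ is funneled through $C_i^+ \cup \eta(C_i^+)$ in the next level — which is precisely what Lemma~\ref{lem:mate} and Theorem~\ref{thm:tau} were set up to deliver. I expect the bookkeeping in the progressing case, distinguishing whether the robber has already descended to level $j+1$ or below versus is still above it, and handling the one- or two-round duration of the step, to be where the argument needs the most care; the synchronizing case and the base case are comparatively routine.
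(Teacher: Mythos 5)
Your proposal is correct and follows essentially the same route as the paper: induction over the run of the algorithm, a case split into the progressing and synchronizing steps, and the use of Lemma~\ref{lem:mate}(i)--(ii) to force any level component of $L_{j+1}$ met by a simple path to be $C_i^+$ or $\eta(C_i^+)$, with the incompleteness hypothesis of the synchronizing case ruling out passage through $C_{i+1}$. The points you flag as delicate (newly created simple paths after the robber's move, and the intermediate rounds of a two-round progressing move) are handled in the paper exactly as you anticipate, the latter being delegated to Lemma~\ref{lem:safe_moves}.
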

\begin{proof}
Suppose that $c_i$ is in the level component $C_i$, $i\in \{1,2\}$, both in the level $L_j$. 
We use induction on $j$. 
If $j=1$, then $c_1$ and $c_2$ are both on $u_1$ and we are done. 
Now,  
 we are going to prove that after (and during) a progressing or synchronizing step, every simple path from the robbers' location to $u_1$ passes through a level component containing $c_1$ or $c_2$. 
Without loss of generality, assume that $C_1^+$ is defined. 
So, there is a path $P$ from the robbers' location to $u_1$ which passes through $C_1$ in $L_j$ and $C_1^+$ in $L_{j+1}$.

First, assume that we are going to perform a  progressing step. 
So, $C_2$ is connected to  $\eta(C_1^+)$ and we move $c_1$ from $C_1$ to $C_1^+$ and $c_2$ from $C_2$ to $\eta(C_1^+)$. 
On the other hand, if $P'$ is a simple path from the robbers' new location to $u_1$, then $P'$ passes through $C_1^+$ or $\eta(C_1^+)$ in $L_{j+1}$. 
To see this, note that if $P'$ passes through a level component $C'$ in $L_{j+1}$, then there is a path from $C_1^+$ to $C'$ and so by, Lemma~\ref{lem:mate}-(i), either $C'=C_1^+$, or $C'=\eta(C_1^+)$. 
This shows that $P'$ passes through a level component  containing $c_1$ or $c_2$ in $L_{j+1}$.

Now, suppose that we are going to perform a  synchronizing step. 
So, $C_2=\eta(C_1)$  is incomplete to  $\eta(C_1^+)$; we keep $c_1$ in $C_1$ and move $c_2$ to $C_1$. 
During these rounds, suppose that $P'$ is a simple path from the robbers' new location to $u_1$ which passes through level components $C$ in $L_j$ and $C'$ in $L_{j+1}$. 
We claim that $C_1=C$. 
To see this, note that there is a path from $C_1^+$ to $C'$ in $L_{\geq j+1}$, so $C'\in \{C_1^+, \eta(C_1^+)\}$ and $C\in \{C_1, C_2\}$. 
For the contrary, suppose that $C=C_2\neq C_1$. 
The case $C'=\eta(C_1^+)$ is in contradiction with the fact that $C_2$ is incomplete to $\eta(C_1^+)$ and the case $C'=C_1^+\neq \eta(C_1^+)$ is in contradiction with Lemma~\ref{lem:mate}-(ii) since $C_1^+$ is connected to both $C_2$ and $\eta(C_2)=C_1$. 
This proves that $C=C_1$ and $P'$ passes through the level component containing $c_1$.

\end{proof}

The next lemma guarantees that the progressing step can be done safely; the robber cannot run off to the lower levels during this step.

\begin{lem} \label{lem:safe_moves}
	Suppose that we are in a progressing step and for each $i\in\{1,2\}$, the cop $c_i$ is in the king vertex of the level component $C_i$ in $L_j$. 
Then, \( c_1,c_2 \) can move safely to a king vertex of \( C_1^+ \) and $\eta(C_1^+)$ in at most two rounds, in the sense that during the movement, the robber cannot move to the level $L_j$ unless he is captured.
\end{lem}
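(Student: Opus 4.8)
The plan is to analyze where the robber can be while a progressing step is being executed, and show that the two king-vertex targets $k_1' \in C_1^+$ and $k_2' \in \eta(C_1^+)$ can be reached in at most two rounds without ever freeing a route for the robber into $L_j$. First I would recall the setup guaranteed by Lemma~\ref{lem:correctness}: immediately before the progressing step, every simple path from the robber's vertex $r$ to $u_1$ passes through a level component containing $c_1$ or $c_2$, and since we are progressing, $C_2 = \eta(C_1)$ and $C_2$ is connected to $\eta(C_1^+) = C_2^+$. In particular $r$ lies in $L_{> j}$ (if $r$ were in $L_{\le j}$ it would already be adjacent to or past the cops in a way that contradicts correctness, or the robber is captured), and any simple $r$–$u_1$ path enters level $L_{j+1}$ through a component that, by Lemma~\ref{lem:mate}-(i), must be $C_1^+$ or $\eta(C_1^+)$, and then enters $L_j$ through $C_1$ or $C_2$.

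Next I would spell out the moves. For each $j \in \{1,2\}$ we have a king vertex $k_j'$ of the target component; by Lemma~\ref{lem:backward_neighbours} applied to the two consecutive levels $L_j, L_{j+1}$ (together with Corollary~\ref{col:xy_edge} and Lemma~\ref{lem:clique}), the king vertex $k_j$ of $C_j$ either is already adjacent to $k_j'$ — in which case $c_j$ steps directly onto $k_j'$ in one round — or $c_j$ first moves within $C_j$ to some vertex $v_j$ adjacent to $k_j'$ (such $v_j$ exists because $C_j$ is connected to $C_1^+$ or to $\eta(C_1^+)$ and king vertices are complete to their component, so the set $N^-(k_j')$ meets $V(C_j)$; using claw-freeness $N^-(k_j')\cap V(C_j)$ is handled so that the king vertex $k_j$ reaches it in one step), then in the second round onto $k_j'$. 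The key point to verify is the intermediate round: when $c_1$ sits on $v_1 \in C_1$ (or on $k_1'$ already) and $c_2$ sits on $v_2 \in C_2$, every simple path from the robber's current location to $u_1$ still passes through $C_1 \cup C_1^+$ or $C_2 \cup \eta(C_1^+)$, and to actually reach $L_j$ from $L_{j+1}$ the robber's path must traverse an \emph{edge} of $C_i^+$–to–$C_i$ type, i.e. go through a vertex of $L_{j+1}$ adjacent to $C_i$ and then into $C_i$; I would argue that any such path, to avoid $k_1'$ and $k_2'$, must pass through a non-king vertex of $C_1^+$ or of $\eta(C_1^+)$, and combine this with Lemma~\ref{lem:mate}-(ii) (the mate $\eta(C_1^+)$ has no neighbor in $C_1^+$, and the backward neighborhoods of $C_1^+$ lie in $C_1\cup C_2$) to show the robber cannot slip past the cop occupying the relevant vertex of $C_i$ or its mate without stepping onto a vertex adjacent to $k_i'$ or $k_{i+1}'$ — hence into capture range or blocked.

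The main obstacle I anticipate is the intermediate round of the two-round version: while $c_i$ is standing on $v_i$ rather than on the king vertex $k_i$ of $C_i$, it temporarily no longer dominates all of $C_i$, so I must rule out the robber reaching an undominated vertex of $C_i$ and descending to $L_j$ through it. Here I would use that $C_i$ is a clique or a $2$-clique (Lemma~\ref{lem:cc_form}): in the clique case $v_i$ is still adjacent to every other vertex of $C_i$, so nothing changes; in the $2$-clique case $C_i = C(A,B,K)$ and the king vertex lies in $K$ while $v_i$ may lie in $A$ (or $B$), leaving $B$ (resp. $A$) undominated, but by Lemma~\ref{lem:backward_neighbours}-(i) the vertices of $L_{j+1}$ feeding into $B$ have backward neighborhood $B\cup K$, so a descending robber path through $B$ would have to enter through a vertex of $L_{j+1}$ adjacent to $K$ hence (by Lemma~\ref{lem:clique}, since $c_i$ could have been routed through $K$) is still intercepted; I would therefore choose $v_i \in K$ whenever $K \cap N^-(k_i') \ne \emptyset$, and show that the only problematic case forces $v_i$ and $k_i'$ to sit in a configuration already excluded by Lemma~\ref{lem:forbidden_path}. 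Once the intermediate round is safe, the final round lands both cops on $k_1', k_2'$, completing the progressing step, and the robber is confined to $L_{> j}$ throughout.
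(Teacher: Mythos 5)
Your construction of the cops' moves matches the paper exactly: use Lemma~\ref{lem:backward_neighbours} to find $v_i\in C_i$ adjacent to $k_i'$, and go $k_i\to k_i'$ in one round or $k_i\to v_i\to k_i'$ in two. You also correctly reduce the danger to a single scenario via Lemma~\ref{lem:correctness} and Lemma~\ref{lem:mate}-(ii): a robber descending to $L_j$ must move from some $r'\in C_1^+$ to some $r\in C_1$ (or symmetrically through $\eta(C_1^+)$ and $C_2$). But the heart of the lemma --- why that single move is impossible without capture --- is where your proposal has a genuine gap. Your argument that the robber ``must step onto a vertex adjacent to $k_i'$ or $k_{i+1}'$ --- hence into capture range or blocked'' does not close the case: being adjacent to $k_i'$ is not capture when $c_i$ is still standing on $v_i$ in the intermediate round, and the robber gets to move again before $c_i$ reaches $k_i'$. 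Timing is exactly the difficulty, and ``capture range'' elides it.

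The paper's resolution is a forbidden-path contradiction, with a case analysis on where $c_1$ sits at the moment the robber arrives at $r$. If $c_1$ is on $v_1$: then $v_1$ is non-adjacent to $r$ (else capture), $k_1'$ is non-adjacent to $r$ (else $v_1-k_1'-r-k_1-v_1$ is an induced $C_4$), and $r'$ is non-adjacent to $v_1$ for the same reason; hence $r'\neq k_1'$ and $v_1-k_1'-r'-r$ is a forbidden path, contradicting Lemma~\ref{lem:forbidden_path}. If $c_1$ is already on $k_1'$, or if $k_1$ was adjacent to $k_1'$ and $c_1$ moved directly, the same four-vertex forbidden path appears (with $k_1$ in place of $v_1$ in the last case). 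You invoke Lemma~\ref{lem:forbidden_path} only tangentially, for choosing $v_i$ inside a $2$-clique --- a non-issue, since the paper's argument is indifferent to whether $v_i$ lies in $A$, $B$, or $K$ and to whether the robber's path uses king vertices of $C_1^+$. To repair your proof, drop the domination/king-vertex bookkeeping in $C_i$ and instead derive the non-adjacencies forced by ``the robber is not captured'' and feed the resulting path $v_1-k_1'-r'-r$ into Lemma~\ref{lem:forbidden_path}.
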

\begin{proof}
		Let \( c_1,c_2 \) be the cops on the king vertices \( k_1,k_2 \) in the level components \( C_1,C_2 \) in \( L_{j} \) and the goal is to move \( c_1,c_2 \) to  king vertices \( k_1',k_2' \) of the  level component \( C_1^+ \) and $\eta(C_1^+)$ in \( L_{j+1} \), respectively.
				
		Since there is an edge between $C_1$ (resp. $C_2$) and $C_1^+$ (resp. $\eta(C_1^+)$), by Lemma~\ref{lem:backward_neighbours}, $k_1'$ (resp. 
$k_2'$) is adjacent to a vertex $v_1$ (resp. $v_2$) in $C_1$ (resp. $C_2$). 
So, for each $i\in\{1,2\}$, if $k_i$ is adjacent to $k_i'$, then simply move $c_i$ from $k_i$ to $k_i'$, and otherwise, move $c_i$ from $k_i$ to $v_i$ and then, in the next round, from $v_i$ to $k_i'$.
		Now, we show that, during the process, the robber cannot move to the level $L_j$ unless he is captured.

		If the robber reaches the level $L_j$, then by Lemma~\ref{lem:correctness}, the robber moves to either $C_1$ or $C_2$. 
Also, by Lemma~\ref{lem:mate}-(ii), the robber moves either from $C_1^+$ to $C_1$, or $\eta(C_1^+)$ to $C_2$. 
Without loss of generality, we assume that the robber moves from the vertex $r'$ in $C_1^+$ to the vertex $r$ in $C_1$.		
%
		First, suppose that $k_1$ is non-adjacent to $k_1'$.
		At the time of arrival of the robber to \( r \), if \( c_1 \) is located on vertex \( v_1 \)  then \( v _1\) should be non-adjacent to \( r \) since otherwise, the robber would be captured.
		Additionally, \( k_1' \) is non-adjacent to \( r \) since otherwise \( v_1 - k_1' - r - k_1 - v_1 \) would be an induced \( C_4 \).
		By the same reason, \( r' \) is non-adjacent to \( v_1 \).
		So, \( k_1' \) and \( r' \) are distinct.
		But now, \( v_1 - k_1' - r' - r \) is a forbidden path, a contradiction.
		If \( c_1 \) is located in the vertex \( k_1' \) at the time of arrival of the robber to \( r\), then to prevent  \( c_1 \) to capture the robber in the next round, \( k_1'  \) is non-adjacent  to \( r \) and so \( r' \) and \( k_1' \) are distinct.
		Also, \( v_1 \) is non-adjacent to \( r' \) since otherwise the robber must had been captured by \( c_1 \) in its last move.
But, again, \( v_1 - k_1' - r' - r \) is a forbidden path.
		Finally, if $k_1$ is adjacent to $k_1'$, then $c_1$ moves directly from $k_1$ to $k_1'$. 
If, after this move, the robber moves from $r'$ to $r$, then again $k_1-k_1'-r'-r$ would be a forbidden path. 
This completes the proof.	
\end{proof}

Now, we are ready to prove the correctness of the algorithm.

\begin{thm}
By executing Algorithm~\ref{alg:main}, the cops can capture the robber in at most $n$ rounds.
\end{thm}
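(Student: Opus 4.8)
The plan is to track the "potential" of the game configuration, namely the common level $L_j$ of the two cops $c_1, c_2$, and show that this level never decreases in a way that wastes too much time, while also showing that the robber is forced into ever-higher levels (or captured). First I would argue that, by Lemma~\ref{lem:correctness}, at every point of the game the robber must sit in some level $L_r$ with $r \geq j$, since any simple path from the robber to $u_1$ meets a component containing $c_1$ or $c_2$ in level $L_j$; if the robber were below level $j$ such a path (a shortest path to $u_1$, which is simple) could not cross level $L_j$ through the cops' components without the robber already having been caught. So the robber's level is at least $j$ at all times after initialization.

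Next I would analyze the two step types. A \textbf{progressing step} moves the cops from level $L_j$ to level $L_{j+1}$ using at most two rounds (Lemma~\ref{lem:safe_moves} guarantees the robber cannot slip down to $L_j$ meanwhile, so afterwards the robber is in $L_{\geq j+1}$). A \textbf{synchronizing step} keeps $c_1$ fixed in $C_1 \subseteq L_j$ and walks $c_2$ through $L_{\geq j}$ to a king vertex of $C_1$; during this step $c_1$ never moves, so by the argument above the robber still cannot reach $L_j$ — he stays in $L_{>j}$ or is caught. The key point I would then establish is that \emph{a synchronizing step cannot be immediately followed by another synchronizing step}: after synchronizing, $c_1, c_2$ are in the same component $C_1$, and the next simple path $P$ through that component has a next vertex $v^+$ in $C_1^+ \subseteq L_{j+1}$; since $C_2 = C_1$ is certainly connected to $\eta(C_1^+)$ (being a king vertex of $C_1$, which is adjacent into $C_1^+$, hence $C_1$ is connected to $C_1^+$, and we need it connected to $\eta(C_1^+)$ — here I would invoke Lemma~\ref{lem:mate}-(ii) or the structure of mates to see that when the cops coincide the next step must be a progressing step). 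Thus synchronizing steps and progressing steps alternate at worst, so between any two consecutive progressing steps there is at most one synchronizing step.

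For the round count, each progressing step costs at most two rounds and raises the cops' level by one; each synchronizing step costs some number of rounds but, crucially, the cops move only through levels $L_{\geq j}$ and along shortest-type paths — here I would bound the length of the synchronizing walk by the number of vertices it visits, all distinct and all in $L_{\geq j}$, and charge those vertices to the levels they occupy, noting that once the cops progress past level $j$ those vertices are never revisited. Summing: the total work is bounded by a constant times $\sum_j (\text{cost at level } j)$, and since the cops' level strictly increases through progressing steps and is bounded by the number of levels $\leq n$, while the synchronizing walks collectively visit each vertex of $G_0$ at most a bounded number of times, the total is $O(n)$; a careful accounting of "two rounds per progressing step plus one amortized synchronizing pass" gives the stated bound of $n$ rounds. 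The robber is caught once the cops' component reaches the robber's level (a singular component, e.g. the last level of a hole, or when the robber is cornered), because then the robber has no simple escape path to $u_1$ avoiding the cops, forcing adjacency.

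The main obstacle I expect is the bookkeeping for the synchronizing steps: one must show that the total number of rounds spent synchronizing, summed over the whole game, is $O(n)$ and not, say, $O(n^2)$. This requires arguing that each synchronizing walk from $C_2$ to $C_1$ inside $L_{\geq j}$ can be taken along a path that is not wastefully long — ideally a path all of whose vertices lie in $L_{\geq j}$ and which is traversed monotonically — and that such walks at different values of $j$ do not overlap in the vertices they use, so their lengths telescope against $n$. Making the "no two consecutive synchronizing steps" claim fully rigorous (using the mate structure from Lemma~\ref{lem:mate} and Theorem~\ref{thm:tau}) is the delicate combinatorial heart of the argument; everything else is the level-monotonicity plus the two safety lemmas already proved.
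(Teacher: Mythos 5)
Your overall strategy is the same as the paper's: show the robber can never drop below the cops' level, show that two synchronizing steps never occur consecutively (so progressing steps keep happening and the cops' level strictly increases), and then bound the number of rounds by charging each round to a freshly visited vertex. Your sketch of the ``no two consecutive synchronizing steps'' claim is essentially the paper's argument: after synchronizing, both cops sit in $C_1$; the component $\eta(C_1^+)$ must attach to some component $C$ of $L_j$, which by Lemma~\ref{lem:mate}-(i) is $C_1$ or $\eta(C_1)=C_2$, and the latter is ruled out because the step was synchronizing precisely because $C_2$ is incomplete to $\eta(C_1^+)$; hence $C=C_1$ and the next step progresses.

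The genuine gap is the claim you yourself flag as the main obstacle and then assert without proof: that the vertices traversed by a synchronizing walk are never visited again. This is not automatic, because the synchronizing path from $C_2$ to $C_1$ lives in $L_{\geq j}$ and may climb into high levels that the cops will later reach via progressing steps. The paper closes this with a specific argument using the mate structure: if a cop later stood on an inner vertex of the synchronizing path $P$, then the component $C$ of $L_{j+1}$ that $P$ meets (which is connected to $C_2$) would admit a path in $L_{\geq j+1}$ to $C_1^+$ or $\eta(C_1^+)$; but $C\neq\eta(C_1^+)$ because the step was synchronizing, and $C\neq C_1^+$ by Lemma~\ref{lem:mate}-(ii), contradicting Lemma~\ref{lem:mate}-(i). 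Without this, your telescoping/amortization does not get off the ground. A second, smaller issue is that your final accounting only reaches ``$O(n)$ with careful bookkeeping,'' whereas the theorem asserts exactly $n$; the clean way to get $n$ is the paper's invariant that in every round at least one cop moves to a never-before-visited vertex, together with the observation that the one possible exception (the final round of a synchronizing step, when $c_2$ enters $C_1$ on a possibly old vertex) is compensated by the preceding progressing round, in which two new vertices were visited simultaneously.
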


\begin{proof}
	 Suppose that, at a moment, the cops are in the level $L_j$, for some $j\geq 1$. By Lemma~\ref{lem:correctness}, the robber can reach to the level $L_j$ only by entering to the level components containing $c_1$ or $c_2$. 
So, if the cops are on the king vertices of their level components, then they can capture him. 
Otherwise, the cops are performing a progressing step, and again they can capture him by Lemma~\ref{lem:safe_moves}.
Hence, it is remained to prove that, at each moment, we always have a progressing step after a finite number of rounds. 
To see this, note that the first step is a progressing step which moves $c_1,c_2$ from $u_1$ to the level $L_2$. 
Suppose that after a progressing step, we have a synchronizing step. 
We claim that the next step is surely a progressing step. 
Assume that after the progressing step, $c_1,c_2$ are in the level component $C_1$ and $C_2=\eta(C_1)$, respectively and $C_2$ is incomplete to $\eta(C_1^+)$. 
So, we perform a synchronizing step and move $c_2$ to $C_1$. 
Now, $c_1,c_2$ are both in $C_1$. 
We claim that $C_1$ is connected to $\eta(C_1^+)$, and so the next step is a progressing step. 
To see this, note that the level component $\eta(C_1^+)$ is connected to a level component $C$ in $L_j$. 
Since there is a path from $C_1$ to $C$ in $L_{\geq j}$, we have either $C=C_1$ or $C=\eta(C_1)=C_2$. 
Now, since $C_2$ is incomplete to $\eta(C_1^+)$, we have $C=C_1$ and so $C_1$ is connected to $\eta(C_1^+)$ and the next step is a progressing step. 
This shows that the algorithm ends in a finite number of rounds, and the cops eventually capture the robber.

Now, we prove that the capture time is at most $n$. 
To do this, we prove that at each round, $c_1$ or $c_2$ visits a new vertex which has not been visited before. 
First, we claim that the vertices visited through a synchronizing step will never be seen afterward. 
To see this, suppose that $c_1$ and $c_2$ are respectively in the level components $C_1$ and $C_2$ in the level $L_j$ and we start a synchronizing step by moving $c_2$ from $C_2$ to $C_1$ through the path $P$ in $L_{\geq j}$. 
Also, suppose that $c_1$ or $c_2$ will visit an inner vertex of $P$ say $v$ in a forthcoming round. 
Also, let $C$ be the level component in $L_{j+1}$ which intersects \( P \) and is connected to $C_2$. 
So, there is a path from $C_1^+$ or $\eta(C_1^+)$ to $C$ in $L_{\geq j+1}$. 
On the other hand, since we are in a synchronizing step, $C\neq \eta(C_1^+)$. 
Also, by Lemma~\ref{lem:mate}-(ii), $C\neq C_1^+$. 
This is a contradiction with Lemma~\ref{lem:mate}-(i). 
This proves the claim. 

Now, note that in every progressing step, we move the cops to vertices in the next level, which, by the claim, are not visited before in previous synchronizing steps. 
Also, in a synchronizing step, the cop moves on a path with unvisited vertices. 
This proves that, in each round, at least one cop moves to an unvisited vertex and thus, the capture time is at most $n$. 
Note that, in the last round of a synchronizing step, when $c_2$ enters the level component containing $c_1$, we might see an already visited vertex. However, this loss has already been compensated since the previous step was a progressing step in which we have seen two unvisited vertices in a single round.
\end{proof}

\section{The Refinement} \label{sec:refinement}
In this section, we are going to improve Algorithm~\ref{alg:main} to prove that two cops are enough to capture the robber in (claw, even-hole)-free graphs.
In fact, in Algorithm~\ref{alg:main}, we always keep a cop on $u_0$ to deter the robber from accessing to $V(G)\setminus V(G_0)$. 
Here, we prove that it is unnecessary to keep a cop on \( u_0 \), and thus two cops are sufficient.
To do this, first, we need a couple of lemmas. 
The following lemma shows how the connections are between the vertices in $B_0$ and $V(G_0)$ (for the definition of $B_0$ and $G_0$, see Subsection~\ref{sub:notation}).

\begin{lem}\label{lem:box}
	For every vertex $b\in B_0$,
	\begin{itemize}
		\item[\rm (i)]
		the set of neighbors of $b$ in $V(G_0)\setminus \{u_0,u_1\}$ is a subset of two consecutive levels of $G_0$ and, forms a clique.

			\item [\rm (ii)]
		Let $b$ be adjacent to a vertex $v\in V(G_0)\setminus \{u_0,u_1\}$. 
Then $b$ is either complete or incomplete to $N^-(v)$ and, is complete to either $N^+(v)$ or $N^-(v)$.
	\end{itemize}
	
\end{lem}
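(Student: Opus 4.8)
Recall $B_0 = N(u_0)\setminus\{u_1\}$, and $G_0$ is the connected component of $G-B_0$ containing $u_0$. So every vertex $b\in B_0$ is adjacent to $u_0$, hence lies at distance $1$ from $u_0$ in $G$, though it is removed from $G_0$. The strategy for both parts is to exploit claw-freeness (Lemma~\ref{lem:clique}) together with the forbidden-path machinery (Lemma~\ref{lem:forbidden_path}) and Corollary~\ref{col:xy_edge}, transplanting the arguments already used for the backward-neighbourhood lemmas to the slightly different situation where one endpoint sits in $B_0$ rather than in a level $L_i$.

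**Proof of (i).** First I would show the neighbours of $b$ in $L_{\geq 1}$ form a clique: if $b$ had two non-adjacent neighbours $x,y\in V(G_0)\setminus\{u_0,u_1\}$, then since $b$ is also adjacent to $u_0$ and $u_0$ is non-adjacent to $x$ and to $y$ (as $x,y\notin\{u_0,u_1\}$ and $N(u_0)\setminus\{u_1\}=B_0$ is disjoint from $V(G_0)$), the set $\{b,u_0,x,y\}$ would induce a claw with centre $b$ — contradiction. Hence $N_{V(G_0)\setminus\{u_0,u_1\}}(b)$ is a clique; in particular any two of its vertices are adjacent, so they lie in the same level or in two consecutive levels (an edge of $G_0$ cannot jump two levels). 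If all these neighbours were in one level $L_i$ we are done; otherwise they occupy exactly two consecutive levels $L_i, L_{i+1}$. The only thing left is to rule out three consecutive levels, but that is automatic from the clique property since $L_i$ and $L_{i+2}$ vertices are never adjacent. This part should be routine.

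**Proof of (ii).** Suppose $b$ is adjacent to $v\in L_i$ with $i\geq 2$ (the cases $i=1$, where $v$ may see $u_0$, need a separate quick check using that $b$–$u_0$ is an edge). For the first claim — $b$ is complete or incomplete to $N^-(v)$ — suppose not: there are $x,y\in N^-(v)\subseteq L_{i-1}$ with $bx\in E$, $by\notin E$. Then consider the path $b - x - v - y$... but $x,y$ need not be non-adjacent; the clean formulation is to run the forbidden-path argument on $u_0 - b - x - \cdots$ reaching back to $u_1$, or more directly to observe that $b$ together with a shortest-path structure to $u_1$ builds a hole; I would instead mimic Corollary~\ref{col:xy_edge}: if $b$ sees $x$ but not $y$, and (by Corollary~\ref{col:xy_edge} applied suitably or by Lemma~\ref{lem:clique}) $x,y$ have a common predecessor or are comparable, one extracts a forbidden path $y - \cdots - x - b$ or an even hole through $u_0$. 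For the second claim — $b$ complete to $N^+(v)$ or to $N^-(v)$ — suppose $b$ misses some $v^+\in N^+(v)\subseteq L_{i+1}$ and some $v^-\in N^-(v)\subseteq L_{i-1}$; then $\{v, b, v^+, v^-\}$ is a claw with centre $v$ (using $v^+v^-\notin E$, which holds since they are two levels apart), contradiction.

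**Main obstacle.** The delicate point is part (ii)'s first assertion: a naive four-vertex claw argument does not immediately work because the two vertices of $N^-(v)$ in question may be adjacent. The fix is to route through $u_0$: since $b\in N(u_0)$, any induced path from $b$ back toward $u_1$ that avoids $u_0$'s closed neighbourhood, closed up via $u_0$, yields a hole whose parity is controlled by Lemma~\ref{lem:forbidden_path}-type reasoning; equivalently, one shows a suitable configuration is a forbidden path after prepending a common predecessor. Making this reduction precise — carefully choosing which shortest paths to $u_1$ to use and checking the resulting cycle is induced and even — is where the real work lies, and it closely parallels the proof of Corollary~\ref{col:xy_edge} and Lemma~\ref{lem:backward_neighbours}-(i).
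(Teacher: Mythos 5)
Your part (i) and the second assertion of part (ii) are exactly the paper's arguments (the claw $\{b,u_0,v_1,v_2\}$ centred at $b$, and the claw $\{v,b,v^-,v^+\}$ centred at $v$, using that $v^-\in L_{i-1}$ and $v^+\in L_{i+1}$ are two levels apart and hence non-adjacent). The problem is the first assertion of (ii), where you correctly flag the difficulty but never resolve it: you end by saying that making the reduction precise ``is where the real work lies,'' which means the proof is not actually given. Moreover, the two concrete routes you float do not work as stated. A forbidden path in the sense of the paper must have both endpoints in some level $L_j$ and its second and second-to-last vertices in $L_{j+1}$; since $b\in B_0$ and $u_0$ lie outside the level structure of $G_0$, a path of the form $u_0-b-x-\cdots$ is not a forbidden path, and Lemma~\ref{lem:forbidden_path} cannot be invoked on it. Likewise ``mimicking Corollary~\ref{col:xy_edge}'' stalls precisely because, as you note, the two vertices $x,y\in N^-(v)$ may be adjacent, so no claw and no forbidden path is available inside the levels.

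The missing step is a specific parity comparison. With $v\in L_i$, $v_1\in N^-(v)$ adjacent to $b$ and $v_2\in N^-(v)$ non-adjacent to $b$, take simple (shortest) paths $P^1$ from $v_1$ to $u_1$ and $P^2$ from $v_2$ to $u_1$, and let $P^0$ be $u_1-b$ or $u_1-u_0-b$ according to whether $b$ is adjacent to $u_1$. Then the two cycles $b-v_1-P^1-u_1-P^0-b$ and $b-v-v_2-P^2-u_1-P^0-b$ have lengths differing by exactly one, because the second detours one level up to $v$ before descending along a path of the same level-by-level length; hence one of them is an even cycle. Part (i) is then used to check they are holes: the neighbours of $b$ in $V(G_0)\setminus\{u_0,u_1\}$ form a clique contained in two consecutive levels, so $b$ has no chord to the inner vertices of $P^1$, $P^2$. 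Without writing down this pair of cycles and the length count, the claim that ``the parity is controlled'' is an assertion, not a proof.
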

\begin{proof}
To prove (i), if $b$ has two non-adjacent neighbors $v_1,v_2$ in $V(G_0)\setminus \{u_0,u_1\}$, then $G$ induces a claw on $\{b,u_0,v_1,v_2\}$ which is a contradiction. 
So, the set of neighbors of $b$ in $V(G_0)\setminus \{u_0,u_1\}$ is a clique and consequently is a subset of two consecutive levels.

To prove (ii), suppose that $b$ is adjacent to a vertex $v_1$ in $N^-(v)$ and is non-adjacent to a vertex $v_2$ in $N^-(v)$. 
Also, let  $P^1$ and $P^2$ be two simple paths from $v_1$ and $v_2$, respectively, to $u_1$. 
Also, Let $P^0$ be an induced path of length at most two from $u_1$ to $b$ (i.e. either $P^0=u_1-b$ or $P^0=u_1-u_0-b$).

Then, $G$ has two induced cycles $b-v_1-P^1-P^0-b$ and $b-v-v_2-P^2-P^0-b$ which are of different parities, a contradiction (note that by (i), \( b \) is non-adjacent to inner vertices of \( P^1 \) and \( P^2 \)). 
This proves that $b$ is complete or incomplete to $N^-(v)$.

Now, if $b$ is non-adjacent to vertices $v_1\in N^-(v)$ and $v_2\in N^+(v)$. 
Then, $G$ induces a claw on $\{v,b,v_1,v_2\}$, a contradiction. This proves (ii).
\end{proof}

\begin{lem}\label{lem:box2}
	Let \( v \) be a vertex  in \( G_0 \).
	If there exists a simple path from \( v \) to \( u_1 \) in \( G_0 \) which passes through a non-singular component \( C \), then \( v \) is incomplete to \( B_0 \).
\end{lem}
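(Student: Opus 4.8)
The plan is to argue by contradiction: suppose there is a simple path $P$ from $v$ to $u_1$ through a non-singular component $C$ (say $C$ lies in level $L_i$), and suppose $v$ is complete to $B_0$, i.e. $v$ has a neighbor $b\in B_0$. Since $C$ is non-singular, it has a mate $\eta(C)$ and, by the construction used repeatedly in the previous section (the method of the proof of Lemma~\ref{lem:forbidden_path} together with Lemma~\ref{lem:mate}), there is an induced path $R$ inside $L_{\geq i}$ joining $C$ to $\eta(C)$; splicing $R$ with shortest paths down to $u_1$ on both ends produces a hole $H$ with $C\in\tau(H)$. First I would use this to locate $v$ precisely relative to the levels and to $H$: the simple path $P$ from $v$ to $u_1$ passes through $C$, so $v\in L_{\geq i}$, and in fact I want to reduce to the case where $v$ itself sits on (or is controlled by) the two level-components of $H$, using Lemma~\ref{lem:mate}-(i) to pin down which components a path from $v$ can traverse.

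The core of the argument is then a parity contradiction of the same flavor as Lemma~\ref{lem:box}-(ii). Having the neighbor $b\in B_0$ of $v$, and having two ``sides'' available at $C$ — namely a route from $v$ down to $u_1$ through $C$ and a route from $v$ down to $u_1$ through $\eta(C)$ (this is exactly what non-singularity of $C$ buys us, via the connecting path inside $L_{\geq i}$) — I would form two induced cycles through $b$: one of the form $b - v - \cdots - u_1 - P^0 - b$ using the $C$-side, the other $b - v - \cdots - u_1 - P^0 - b$ using the $\eta(C)$-side, where $P^0$ is an induced path of length at most two from $u_1$ to $b$ (either $u_1-b$ or $u_1-u_0-b$), just as in Lemma~\ref{lem:box}. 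Because the two descending routes differ by the extra ``detour'' through the connecting path between $C$ and $\eta(C)$, the two cycles differ in parity, so one of them is an even hole — contradiction. To make the two closed walks genuinely induced I would invoke Lemma~\ref{lem:box}-(i) (so $b$ has no chords to inner vertices of the descending paths, its $G_0$-neighborhood being a clique in two consecutive levels) and Lemma~\ref{lem:clique}/Corollary~\ref{col:xy_edge} to control chords among the level-vertices, shortcutting to shortest simple paths (one vertex per level) wherever a chord appears — chord-elimination only changes lengths by even amounts when it stays within the level structure governed by Lemma~\ref{lem:forbidden_path}, so the parity discrepancy survives.

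The main obstacle I anticipate is bookkeeping the parity carefully: I must show the two descending routes through $C$ and through $\eta(C)$ genuinely have opposite parity (not merely ``different lengths''), which is where the hole $H$ with $C\in\tau(H)$ is essential — $H$ itself is odd (no even holes), its segment on one side of $C\cup\eta(C)$ versus the other gives the parity anchor, and any simple $v$-to-$u_1$ path can be compared against $H$ level-by-level using Lemma~\ref{lem:structure_H} and Lemma~\ref{lem:H_inner_level_king vertex} (every $H$-vertex in $\tau(H)$ is a king vertex, so we may freely route through king vertices and keep paths induced). A secondary subtlety is the degenerate case where $v$ lies in the last level of the relevant hole or where $b$ attaches at $u_1$ directly versus through $u_0$; these are handled exactly as the two choices of $P^0$ in Lemma~\ref{lem:box}. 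Once the parity of the two cycles through $b$ is pinned down, the even-hole-freeness of $G$ closes the argument.
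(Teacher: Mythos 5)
Your overall strategy is the same as the paper's: assume $v$ has a neighbor $b\in B_0$, use non-singularity of $C$ to produce a second descent to $u_1$ via the connecting path to $\eta(C)$, and derive a contradiction from two induced cycles through $b$ of different parities. However, the step you identify as the ``core'' contains a genuine error. You claim the two cycles differ in parity \emph{because} one of them takes the extra detour through the connecting path between $C$ and $\eta(C)$. That detour lies in $L_{\geq i}$ and joins two vertices of $L_i$, ascending to the apex level and descending back; by Lemma~\ref{lem:structure_H} it has \emph{even} length. Meanwhile the two simple descents from $L_i$ to $u_1$ both have length $i-1$. So two cycles through $b$ and $v$ that differ only by inserting the detour have the \emph{same} parity, and your contradiction evaporates.

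The actual source of the odd discrepancy in the paper is Lemma~\ref{lem:box}: since $N_{G_0}(b)\setminus\{u_0,u_1\}$ is a clique contained in two consecutive levels and $b$ is complete to $N^+(v)$ or $N^-(v)$, the vertex $b$ is adjacent to exactly one of the second vertex $v'\in L_{i+1}$ of the upward (detour) path or the second vertex $v''\in L_{i-1}$ of the downward path $P$ -- and never both. Whichever holds, one of the two cycles must be closed by shortcutting past $v$ directly to $v'$ (or $v''$), which shifts its length by exactly one and produces the parity clash. This is a different mechanism from the one you describe, and it is what makes the hypothesis ``$v$ adjacent to $b$'' essential. In addition, your reduction ``to the case where $v$ itself sits on the two level-components of $H$'' is not carried out: the paper must separately treat $v\in V(C)$ versus $v\notin V(C)$, and within each, the case where the connecting path to $\eta(C)$ does not start at the relevant vertex; those subcases require the king-vertex facts (Lemma~\ref{lem:H_inner_level_king vertex}) and Corollary~\ref{col:xy_edge} to rule out chords, and are not mere bookkeeping. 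As written, the proposal would not compile into a correct proof without replacing its central parity claim.
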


\begin{proof}
	For the contrary, suppose that $v$ is adjacent to a vertex $b\in B_0$ and there is a simple path $P$ from $v$ to $u_1$ which passes through a non-singular level component $C$ in $L_i$, for some $i\geq 3$. 
Let $P^0$ be an induced path of length at most two from $u_1$ to $b$ (i.e. either $P^0=u_1-b$ or $P^0=u_1-u_0-b$).
	We consider two possibilities:
	\paragraph{Case 1.} $v\in V(C)$. 
Since $C$ is non-singular, there is an induced path from $C$ to $\eta(C)$ whose inner vertices are in $L_{>i}$. 
First, suppose that there is such a path $P'$ which starts from $v$ and ends in a vertex $w$ in $\eta(C)$. 
Let $P''$ be a simple path from $w$ to $u_1$.
Let $v'\in L_{i+1}$ and $v''\in L_{i-1}$ be the second vertices in $P'$ and $P$ respectively.
By Lemma~\ref{lem:box}, $b$ is adjacent to either $v'$ or $v''$ and not both. 
In the former case, the holes $b-v'-P'-w-P''-u_1-P^0-b$ and $b-v-P-u_1-P^0-b$ have different parities, a contradiction. 
In the latter case, the holes $b-v''-P-u_1-P^0-b$ and $b-v-P'-w-P''-u_1-P^0-b$ have different parities, a contradiction.
	
Now, suppose that there is no such path $P'$ from $v$ to $\eta(C)$. 
By the definition, there is a path $P'$ from a vertex $u\neq v$ in $V(C)$ to a vertex $w$ in $\eta(C)$ whose inner vertices lie in $L_{>i}$.
	Also, let $v',v''$ and $P''$ are defined as above and let $w''\in L_{i-1}$ be the second vertex in $P''$. 
By the assumption, $v$ is non-adjacent to $v'$. 
Also, $b$ is non-adjacent to $u$ (otherwise, we can replace the role of $v$ with $u$ and the above argument yields a contradiction). 
Also, by Lemma~\ref{lem:H_inner_level_king vertex}, $u$ is a king vertex of $C$ and so $u$ is adjacent to $v$. 
Note that $v''$ is adjacent to $u$, since otherwise the path $v''-v-u-P'-w-w''$ is a forbidden path, a contradiction with Lemma~\ref{lem:forbidden_path}.
	Now, we prove that $b$ is non-adjacent to $v''$. 
To see this, suppose that $b$ is adjacent to $v''$. 
Then, if $v''\in L_{\geq 3}$, then by Lemma~\ref{lem:box}, $b$ is adjacent to $u$, a contradiction and if $v''\in L_2$, then $G$ induces a claw on $\{v'',w'',u,b\}$ (since $L_2$ is a clique).
This shows that $b$ is non-adjacent to $v''$. 
Now, the holes $b-v-P-u_1-P^0-b$ and $b-v-u-P'-w-P''-u_1-P^0-b$ have different parities, a contradiction. 
This yields that Case 1 cannot happen.
	
	\paragraph{Case 2.} $v\notin V(C)$. 
Without loss of generality, we can assume that $b$ is non-adjacent to inner vertices of \( P \). 
Also, we can assume that $C$ is the first non-singular level component that \( P \) visits. 
Let $u$ be the vertex in $V(P)\cap V(C)$. 
First, assume that there is an induced path $P'$ from $u$ to a vertex $w$ in $\eta(C)$ whose inner vertices are in $L_{>i}$. 
Also, let $u'$ be the vertex before $u$ in the path $P$ and $w'$ be the vertex after $u$ in the path $P'$. 
Then, $u',w'$ are in the same level component $C'$ which is singular (note that non-singularity of $ C'$ is in contradiction with the choice of $C$). 
Thus, $P'=u-w'-w$.  
	By Case 1, $b$ is non-adjacent to $w$ and $u$. 
If $u'\neq v$, define $u''$ be the vertex before $u'$ in $P$ and if $u'=v$, define $u''=b$. 
Thus, $u'$ is non-adjacent to $w$, since otherwise $G$ induces a claw on $\{u',u,w,u''\}$. 
This implies that $u'$ and $w'$ are distinct. 
Also, $w'$ is non-adjacent to $u''$, since otherwise $G$ induces a claw on $\{w',u,w,u''\}$. 
Now, we have two holes $b-v-P-u_1-P^0-b$ and $b-v-P-u'-w'-w-P''-u_1-P^0-b$ whose lengths differ by one, a contradiction.
	
Now, suppose that there is no such path $P'$ from $u$ to $\eta(C)$. 
By the definition, there is an induced path $P'$ from a vertex $z\neq u$ in $V(C)$ to a vertex $w$ in $\eta(C)$. 
Let $u'\in L_{i+1}$ be the vertex before $u$ in the path $P$ and $z'\in L_{i+1}$ be the vertex after $u'$ in the path $P'$. 
Then, we can assume that $u$ is non-adjacent to $z'$ and $u'$ is non-adjacent to $z$ since otherwise, we have a situation discussed above. 
Thus, by Corollary~\ref{col:xy_edge}, \( u' \) is non-adjacent to \( z' \).
Also, by Lemma~\ref{lem:H_inner_level_king vertex}, \( z' \) is a king vertex of its level component, so \( u' \) and \( z' \) are located in different level components.
Hence, by choice of $C$, every inner vertex of $P'$ and every vertex of $P$ are in different level components. 
Therefore, we have two holes $b-v-P-u_1-P^0-b$ and $b-v-P-u-z-P'-w-P''-u_1-P^0-b$ which are of different parities, a contradiction. 
This proves that Case 2 cannot happen.	
\end{proof}

Now, we slightly modify  Algorithm~\ref{alg:main} to prove that two cops are sufficient to capture the robber in all (claw, even-hole)-free graphs.
To do this, in the initialization round, we locate the cops $c_1$ and \( c_2 \) on an arbitrary vertex $u_0$ and if $P^0=u_0-u_1-\cdots$ is a path from $u_0$ to the robber's position, then we move $c_1$ to $u_1$. 
Now, if $c_1$ is in a level component $C_1$, as long as $C_1^+$ is singular, we move $c_1$ to a king vertex of $C_1^+$. 
Once $C_1^+$ is non-singular, $c_2$ will be free and we perform the synchronizing step, i.e. we move $c_2$ to the level component $C_1$. 
Then, we act exactly the same as Algorithm~\ref{alg:main}. 
By the argument in Subsection~\ref{sub:correct}, during the execution of the algorithm, if the cop $c_1$ is in a level $L_i$, then the robber cannot move to a lower level than $L_i$ using the vertices of $G_0$. 
We just need to show that the robber cannot move to the vertices in $B_0$. 
To see this, note that as long as $C_1^+$ is singular, there is a cop on $u_0$, so the robber cannot move to $B_0$ unless he is captured. 
Suppose that, for the first time, $C_1^+$ is non-singular. 
Afterwards, if the robber is in a vertex $v\in V(G_0)$, then by Lemma~\ref{lem:correctness}, every simple path from $v$ to $u_1$ passes through the level components $C_1^+$ or $\eta(C_1^+)$ which are non-singular. 
So, by Lemma~\ref{lem:box2}, $v$ is incomplete to $B_0$ and so the robber cannot reach $B_0$. 
This proves that the robber is trapped in $G_0$ and will be captured by $c_1$ or $c_2$.
Finally, note that in the first synchronizing step, $c_1$ might move on the already visited vertices, so the capture time cannot be bounded by $n$; however, it is still a most  $2n$.

\section{Concluding Remarks}
In this paper, we proved that the cop number of all (claw, even-hole)-free graphs is at most two. 
This can be considered as a first step towards answering Question~\ref{qst:main} asking that if all even-hole-free graphs are cop bounded. 
Since the cop number is bounded by treewidth and it is shown that (triangle, even-hole)-free graphs have bounded treewidth \cite{CAMERON2018463}, they are another subclasses of even-hole-free graphs which are cop-bounded.

As Sintiari et al. \cite{Sintiari:2019aa} proved that ($K_4$, even-hole)-free graphs have unbounded treewidth, a second approach to answering Question~\ref{qst:main} could be investigating the cop number of ($K_4$, even-hole)-free graphs. 
It is noteworthy that a subclass of ($K_4$, even-hole)-free graphs with unbounded treewidth introduced in \cite{Sintiari:2019aa}, called ``layered wheels'', happens to be cop-bounded. 
Moreover, Cameron et al. \cite{cameron2018structure} studied the structure of the class of (pan, even-hole)-free graphs, which is a superclass of (claw, even-hole)-free graphs. 
So, it is natural to ask how one can generalize our results to (pan, even-hole)-free graphs.


\newcommand{\etalchar}[1]{$^{#1}$}

\end{document}